\documentclass[a4paper,UKenglish]{article}

% Lipics macros::begin %%%%%%%%%%%%%%%%%%%%%%%%%%%%%%%%%%%%%%%%%%%%%%%%

\usepackage[utf8]{inputenc}
\IfFileExists{lmodern.sty}{\usepackage{lmodern}}{}
\usepackage[T1]{fontenc}
\usepackage{babel}
\usepackage{amssymb, amsmath, amsthm}

\theoremstyle{plain}
\newtheorem{theorem}{Theorem}
\newtheorem{lemma}[theorem]{Lemma}
\theoremstyle{definition}
\newtheorem{definition}[theorem]{Definition}
\theoremstyle{remark}
\newtheorem*{remark}{Remark}

% Lipics macros::end %%%%%%%%%%%%%%%%%%%%%%%%%%%%%%%%%%%%%%%%%%%%%%%%%

\usepackage{microtype, booktabs, fullpage}
\bibliographystyle{plainurl}

% Author macros::begin %%%%%%%%%%%%%%%%%%%%%%%%%%%%%%%%%%%%%%%%%%%%%%%%

\newcommand{\eps}{\varepsilon}
\newcommand{\E}{\mathbb{E}}
\newcommand{\bigO}{\mathcal{O}}
\newcommand{\R}{\mathbb{R}}

\title{On Using Toeplitz and Circulant Matrices for
  Johnson-Lindenstrauss Transforms}

\author{Casper Benjamin Freksen\thanks{This research is supported by a Villum
    Young Investigator Grant, an AUFF Starting Grant and MADALGO,
	Center for Massive Data Algorithmics,
	a Center of the Danish National Research Foundation, grant DNRF84.
	Department of Computer Science, Aarhus University.
	\{\texttt{cfreksen,larsen}\}\texttt{@cs.au.dk}.}
 \and
 	Kasper Green Larsen\footnotemark[1]}

% Author macros::end %%%%%%%%%%%%%%%%%%%%%%%%%%%%%%%%%%%%%%%%%%%%%%%%%

\usepackage{hyperref}

\begin{document}

\date{}
\maketitle

\begin{abstract}
  The Johnson-Lindenstrauss lemma is one of the corner stone results
  in dimensionality reduction. It says that given $N$, for any set of
  $N$ vectors $X \subset \R^n$, there exists a mapping
  $f : X \to \R^m$ such that $f(X)$ preserves all pairwise distances
  between vectors in $X$ to within $(1 \pm \eps)$ if
  $m = \bigO(\eps^{-2} \lg N)$. Much effort has gone into developing
  fast embedding algorithms, with the Fast Johnson-Lindenstrauss
  transform of Ailon and Chazelle being one of the most well-known
  techniques. The current fastest algorithm that yields the optimal
  $m = \bigO(\eps^{-2}\lg N)$ dimensions has an embedding time of
  $\bigO(n \lg n + \eps^{-2} \lg^3 N)$. An exciting approach towards
  improving this, due to Hinrichs and Vybíral, is to use a random
  $m \times n$ Toeplitz matrix for the embedding. Using Fast Fourier
  Transform, the embedding of a vector can then be computed in
  $\bigO(n \lg m)$ time. The big question is of course whether
  $m = \bigO(\eps^{-2} \lg N)$ dimensions suffice for this
  technique. If so, this would end a decades long quest to obtain
  faster and faster Johnson-Lindenstrauss transforms. The current best
  analysis of the embedding of Hinrichs and Vybíral shows that
  $m = \bigO(\eps^{-2}\lg^2 N)$ dimensions suffices. The main result
  of this paper, is a proof that this analysis unfortunately cannot be
  tightened any further, i.e., there exists a set of $N$ vectors
  requiring $m = \Omega(\eps^{-2} \lg^2 N)$ for the Toeplitz approach
  to work.
\end{abstract}

\section{Introduction}
\label{sec:introduction}

The performance of many geometric algorithms depends heavily on the
dimension of the input data. A widely used technique to combat this
``curse of dimensionality'', is to preprocess the input via
\emph{dimensionality reduction} while approximately preserving
important geometric properties. Running the algorithm on the lower
dimensional data then uses less resources (time, space, etc.) and an
approximate result for the high dimensional data can be derived from
the low dimensional result.

Dimensionality reduction approximately preserving pairwise
Euclidean distances has found uses in a wide variety of
applications, including: Nearest-neighbour search
\cite{Ailon:2009:tFJLTaANN,Har-Peled:2012:ANNtRtCoD}, clustering
\cite{Boutsidis:2015:RDRfkMC,Cohen:2015:DRfkMCaLRA}, linear
programming \cite{Vu:2015:UtJLLiLaIP}, streaming algorithms
\cite{Muthukrishnan:2005:DSAaA}, compressed sensing
\cite{Candes:2006:RUPESRfHIFI,Donoho:2006:CS}, numerical linear
algebra \cite{Woodruff:2014:SaaTfNLA}, graph sparsification
\cite{Spielman:2011:GSbER}, and differential privacy
\cite{Blocki:2012:tJLTIPDP}. See more applications in
\cite{Vempala:2004:tRPM,Indyk:2001:AAoLDGE}. The most fundamental
result in this regime is the Johnson-Lindenstrauss
(JL) lemma \cite{Johnson:1984:EoLMiaHS}, which says the following:
\begin{theorem}[Johnson-Lindenstrauss lemma]
  \label{thm:jl}
  Let $X \subset \R^n$ be a set of $N$ vectors, then for any
  $0 < \eps < 1/2$, there exists a map $f : X \to \R^m$ for
  some $m = \bigO(\eps^{-2} \lg N)$ such that
  \begin{equation*}
    \forall x, y \in X, (1 - \eps) \|x - y\|_2^2 \leq \|f(x) - f(y)\|_2^2 \leq (1 + \eps)\|x - y\|_2^2.
  \end{equation*}
\end{theorem}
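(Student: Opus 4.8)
The plan is to prove the standard \emph{distributional} form of the lemma --- a concentration bound for the squared length of a single random projection of a fixed vector --- and then apply it to all $\binom{N}{2}$ pairwise difference vectors via a union bound. Concretely, I would take $f$ to be linear and random: let $f(x) = \tfrac{1}{\sqrt{m}} R x$, where $R$ is an $m \times n$ matrix with independent $N(0,1)$ entries and $m = C\eps^{-2}\lg N$ for a sufficiently large absolute constant $C$. It then suffices to show that this single random $f$ has the stated property with probability bounded away from $0$ (in fact tending to $1$), since this certifies the existence of a good map.

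The core step is the one-vector estimate: for every fixed unit vector $u \in \R^n$ and every $0 < \eps < 1/2$,
\begin{equation*}
  \Pr\!\left[\, \bigl| \,\|f(u)\|_2^2 - 1\, \bigr| > \eps \,\right] \;\leq\; 2 e^{-c\eps^2 m}
\end{equation*}
for a universal constant $c > 0$. The key observation is that $\|f(u)\|_2^2 = \tfrac{1}{m}\sum_{i=1}^{m}\langle R_i, u\rangle^2$ with $R_1,\dots,R_m$ the i.i.d.\ $N(0,I_n)$ rows of $R$; since $\|u\|_2 = 1$, each $\langle R_i, u\rangle$ is a standard Gaussian, so $m\,\|f(u)\|_2^2$ is a chi-squared variable with $m$ degrees of freedom. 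The bound then follows from a Chernoff / moment-generating-function estimate for $\chi^2_m$ (the Laurent--Massart tails), which controls both the upper and the lower deviation; one may take $c = 1/8$.

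Given this, the rest is bookkeeping. By linearity and scaling, for any distinct $x,y \in X$, applying the one-vector estimate to $u = (x-y)/\|x-y\|_2$ and multiplying through by $\|x-y\|_2^2$ shows that the pair $(x,y)$ violates the desired two-sided bound with probability at most $2 e^{-c\eps^2 m}$. A union bound over the at most $\binom{N}{2} < N^2$ unordered pairs gives total failure probability at most $2 N^2 e^{-c\eps^2 m}$, which is strictly less than $1$ (indeed $o(1)$) as soon as $m \geq C\eps^{-2}\lg N$ for $C$ large enough in terms of $c$. Hence a map $f$ as required exists with $m = \bigO(\eps^{-2}\lg N)$.

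The only genuinely substantive ingredient is the concentration inequality, and I would expect the ``hard part'' there to be nothing more than pinning down clean constants in the $\chi^2_m$ moment-generating-function computation. If one insists on avoiding Gaussians --- e.g.\ using a $\pm 1$ or sparse sign matrix $R$ as in Achlioptas's variant --- the same outline works, but the one-vector estimate now requires bounding $\E\!\left[e^{\lambda(\|f(u)\|_2^2 - 1)}\right]$ directly, which brings in a (still elementary) decoupling or fourth-moment argument to dispatch the cross terms; that is where essentially all of the work would go.
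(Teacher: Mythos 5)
Your proposal is correct and matches the paper's own sketch essentially exactly: a random Gaussian (or sign) matrix $\frac{1}{\sqrt m}R$, a one-vector concentration bound of the form $\Pr[\,|\|f(u)\|_2^2-1|>\eps\,]\leq 2e^{-c\eps^2 m}$, and a union bound over the $\binom N2$ difference vectors. The paper does not prove Theorem~\ref{thm:jl} in detail (it cites \cite{Johnson:1984:EoLMiaHS,Dasgupta:2003:aEPoaToJaL,Achlioptas:2003:DFRPJLwBC}), but the outline it gives is precisely this one.
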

This result dates back to 1984 and says that to preserve pairwise
Euclidean distances amongst a set of $N$ points/vectors in $\R^n$ to
within a factor $(1\pm \eps)$, it suffices to use just $m =
\bigO(\eps^{-2} \lg N)$ dimensions. The bound on $m$ was very recently
proven optimal~\cite{Larsen:2016:OotJLL}.

The standard technique for constructing a map with the properties of
Theorem~\ref{thm:jl} is the following: Let $A$ be an $m \times n$ matrix with
entries independently sampled as either $\mathcal{N}(0,1)$ random
variables (as in \cite{Dasgupta:2003:aEPoaToJaL}) or Rademacher
(uniform among $\{-1,+1\}$)  random variables
(as in \cite{Achlioptas:2003:DFRPJLwBC}). Once such entries have been
drawn, let
$f : \R^n \to \R^m$ be defined as:
\begin{equation*}
  f(x) = \frac{1}{\sqrt{m}} A x.
\end{equation*}
To prove that the map $f$ satisfies the guarantees in Theorem~\ref{thm:jl}, it is first shown
that for any vector $x$, the probability that $\|f(x)\|_2^2$ is not
within $(1\pm \eps)\|x\|_2^2$ is less than $1/N^2$. This probability
is called the error probability and denoted $\delta$. Using linearity
of $f$ and a union bound over all pairs $x, y \in X$, the
probability that all pairwise distances (i.e.\@ the norm of the vector
$x - y$) are preserved can be shown to be at least $1/2$.

\subsection{Time Complexity}
Examining the classic Johnson-Lindenstrauss reduction above, we see
that to embed a vector, we need to multiply with a dense matrix and
the embedding time becomes $\bigO(n m)$ (or equivalently
$\bigO(n \eps^{-2} \lg N)$). This may be prohibitively large for many
applications (recall one prime usage of dimensionality reduction is to
speed up algorithms), and much research has been devoted to obtaining
faster embedding time.

\paragraph*{Fast Johnson-Lindenstrauss Transform.}
Ailon and Chazelle~\cite{Ailon:2009:tFJLTaANN} were the first to
address the question of faster Johnson-Lindenstrauss transforms. In
their seminal paper, they introduced the so-called Fast
Johnson-Lindenstrauss transform for speeding up dimensionality
reduction. The basic idea in their paper is to first ``precondition''
the input data by multiplying with a diagonal matrix with random
signs, followed by multiplying with a Hadamard matrix. This has the
effect of ``spreading'' out the mass of the input vectors, allowing
for the dense matrix $A$ above to be replaced with a sparse
matrix. Since we can multiply with a Hadamard matrix using Fast
Fourier Transform, this gives an embedding time of
$\bigO(n \lg n + \eps^{-2} \lg^3 N)$ for embedding into the optimal
$m = \bigO(\eps^{-2} \lg N)$ dimensions. For
$m=\eps^{-2} \lg N \leq n^{1/2 - \gamma}$ for any constant
$\gamma > 0$, the embedding complexity was improved even further down
to $\bigO(n \lg m)$ in \cite{Ailon:2009:FDRURSoDBC}.

Another approach to achieve the $\bigO(n \lg m)$ embedding time, but
without the restriction on $\eps^{-2} \lg N \leq n^{1/2 - \gamma}$, is
to sacrifice the target dimension. This was done in
\cite{Ailon:2013:aAOUFJLT} and later improved in
\cite{Krahmer:2011:NaIJLEvtRIP}, where the embedding complexity was
$\bigO(n \lg m)$ at the cost of an increased target dimension $m =
\bigO(\eps^{-2} \lg N \lg^4 n)$.

\paragraph*{Sparse Vectors.}
Another approach to improve the performance of JL transforms, is to
assume the input data is sparse, i.e.\@ has few non-zero
coordinates. Designing an algorithm based on the work in
\cite{Weinberger:2009:FHfLSML}, Dasgupta et
al.~\cite{Dasgupta:2010:aSJLT} achieved an embedding complexity of
$\bigO(\|x\|_0 \eps^{-1} \lg^2 (mN) \lg N)$, where
$\|x\|_0 = |\{i \mid x_i \neq 0\}|$. This was later improved to
$\bigO(\|x\|_0 \eps^{-1} \lg N)$ in \cite{Kane:2014:SJLT}.

\paragraph*{Toeplitz Matrices.}
Finally, another very exciting approach is to use Toeplitz matrices or
partial circulant matrices for the embedding. We first introduce the
terminology.

An $m \times n$ Toeplitz matrix is an $m \times n$ matrix, where every
entry on a diagonal has the same value:
\begin{equation*}
  \begin{pmatrix}
    t_0 & t_1 & t_2 & \cdots & t_{n-1} \\
    t_{-1} & t_0 & t_1 & \cdots & t_{n-2} \\
    t_{-2} & t_{-1} & t_0 & \cdots & t_{n-3} \\
    \vdots & \vdots & \vdots & \ddots & \vdots \\
    t_{-(m-1)} & t_{-(m-2)} & t_{-(m-3)} & \cdots & t_{n - m}
  \end{pmatrix}
\end{equation*}
A partial circulant matrix is a special kind of Toeplitz matrix, where
every row, except the first, is the previous row rotated once:
\begin{equation*}
  \begin{pmatrix}
    t_0 & t_1 & t_2 & \cdots & t_{n-1} \\
    t_{n-1} & t_0 & t_1 & \cdots & t_{n-2} \\
    t_{n-2} & t_{n-1} & t_0 & \cdots & t_{n-3} \\
    \vdots & \vdots & \vdots & \ddots & \vdots \\
    t_{n-(m-1)} & t_{n-(m-2)} & t_{n-(m-3)} & \cdots & t_{n - m}
  \end{pmatrix}
\end{equation*}

Hinrichs and Vybíral \cite{Hinrichs:2011:JLLfCM} proposed the
following algorithm for generating a JL embedding based on a Toeplitz
matrix\footnote{\cite{Hinrichs:2011:JLLfCM} uses a partial circulant
  matrix but notes that a Toeplitz matrix could be used as well.}: Let
$t_{-(m-1)}, t_{-(m-2)}, \dotsc, t_{n-1}$ and $d_1, \dotsc, d_n$ be
i.i.d. Rademacher random variables, and $T$ be a Toeplitz matrix
defined from $t_{-(m-1)}, t_{-(m-2)}, \dotsc, t_{n-1}$ such that entry
$(i,j)$ takes values $t_{j-i}$ for $i =1, \dotsc, m$ and
$j=1, \dotsc, n$. Let $D$ be an $n \times n$ diagonal matrix with the
random variable $d_i$ giving the $i$'th diagonal entry. Define the map
$f$ as
\begin{equation*}
  f(x) = \frac{1}{\sqrt{m}} T D x.
\end{equation*}

Multiplying with a Toeplitz matrix corresponds to computing a
convolution and can be done using Fast Fourier Transform. By
appropriately blocking the input coordinates, the complexity of
embedding a vector $x$ is just $\bigO(n \lg m)$ for any target
dimension $m$. The big question is of course, how low can the target
dimension $m$ be, while preserving the distances between vectors up to
a factor of $1 \pm \eps$?

In the original paper \cite{Hinrichs:2011:JLLfCM}, the authors proved
that setting the target dimension to
$m = \bigO(\eps^{-2} \lg^3(1/\delta))$, the norm of any vector would
be preserved to within $(1 \pm \eps)$ with probability at least
$1 - \delta$. Setting $\delta = 1/N^2$, a union bound over all
pairwise difference vectors (as in the classic construction) shows
that dimension $m = \bigO(\eps^{-2} \lg^3 N)$ suffices. Later, the
analysis was refined in \cite{Vybiral:2011:aVotJLLfCM}, which lowered
the target dimension to $m = \bigO(\eps^{-2} \lg^2(1/\delta))$ for
preserving norms to within $(1 \pm \eps)$ with probability $1-\delta$.
Again, setting $\delta = 1/N^2$, this gives
$m = \bigO(\eps^{-2} \lg^2 N)$ target dimension. Now if the analysis
could be tightened even further to give the optimal
$m = \bigO(\eps^{-2} \lg N)$ dimensions, this would end the decades
long quest for faster and faster embedding algorithms!

\begin{table}[t]
  \centering
  \caption{Comparison of the performances of various
    Johnson-Lindenstrauss transform algorithms. $N$ is the number of
    input vectors, $n$ is the dimension of the input vectors, $m$ is
    the dimension of the output vectors, $\eps$ is the distortion.}
  \label{tab:jl_comparison}
  \begin{tabular}{l l l l l}
    \toprule
    Type & Embedding time & Target dimension ($m$) & Ref. & Notes \\
    \midrule
    Random projection & $\bigO(n m)$ & $\bigO(\eps^{-2} \lg N)$ & \cite{Dasgupta:2003:aEPoaToJaL} & \\
    Sparse & $\bigO(\|x\|_0 \eps^{-1} \lg^2 (mN) \lg N)$ & $\bigO(\eps^{-2} \lg N)$ & \cite{Dasgupta:2010:aSJLT} \\
    Sparse & $\bigO(\|x\|_0 \eps^{-1} \lg N)$ & $\bigO(\eps^{-2} \lg N)$ & \cite{Kane:2014:SJLT} \\
    FFT & $\bigO(n \lg n + m \lg^2 N)$ & $\bigO(\eps^{-2} \lg N) $ & \cite{Ailon:2009:tFJLTaANN} & \\
    FFT & $\bigO(n \lg m) $ & $\bigO(\eps^{-2} \lg N)$ & \cite{Ailon:2009:FDRURSoDBC} & $m \leq n^{1/2 - \gamma}$ \\
    FFT & $\bigO(n \lg m) $ & $\bigO(\eps^{-2} \lg N \lg^4 n)$ & \cite{Krahmer:2011:NaIJLEvtRIP} \\
    Toeplitz & $\bigO(n \lg m)$ & $\bigO(\eps^{-2} \lg^3 N)$ & \cite{Hinrichs:2011:JLLfCM} \\
    Toeplitz & $\bigO(n \lg m)$ & $\bigO(\eps^{-2} \lg^2 N)$ & \cite{Vybiral:2011:aVotJLLfCM} \\
    \bottomrule
  \end{tabular}
\end{table}

\paragraph*{Our Contribution.}
Our main result unfortunately shows that the analysis of
Vybíral~\cite{Vybiral:2011:aVotJLLfCM} cannot be tightened to give an
even lower target dimensionality. More specifically, we prove that the
upper bound given in \cite{Vybiral:2011:aVotJLLfCM} is optimal:
\begin{theorem}
  \label{thm:highlevelmain}
  Let $T$ and $D$ be the $m \times n$ Toeplitz and $n \times n$ diagonal matrix in the embedding
  proposed by~\cite{Hinrichs:2011:JLLfCM}. For all $0 < \eps < C$, where $C$ is a universal
  constant, and any desired error probability $\delta > 0$, if the
  following holds for every unit vector $x \in \R^n$:
  \begin{equation*}
    \Pr\left[\left|\left\|\frac{1}{\sqrt{m}}TDx\right\|_2^2 - 1\right| <
      \eps\right] > 1-\delta,
  \end{equation*}
  then it must be the case that
  $m = \Omega(\eps^{-2} \lg^2(1/\delta))$.
\end{theorem}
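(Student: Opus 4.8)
The plan is to exhibit, for each admissible pair $(\eps,\delta)$, a single unit vector witnessing that $m$ cannot be too small, namely the uniform unit vector supported on a block of $k$ consecutive coordinates,
\[
  x=\tfrac1{\sqrt k}(e_1+e_2+\dots+e_k),
\]
where $k\asymp\eps\sqrt m$ will be fixed at the end (and the degenerate regime where $\eps\sqrt m$ falls below an absolute constant is handled separately by a $2$-sparse vector, which already yields the classical $m=\Omega(\eps^{-2}\lg(1/\delta))$ bound, matching in that range). The first step is to condition on the event $B=\{d_1=d_2=\dots=d_k\}$, which has probability $2^{1-k}$. On $B$ the matrix $D$ acts trivially on the support of $x$, so $TDx=\tfrac1{\sqrt k}T(e_1+\dots+e_k)$, and a direct computation gives $(TDx)_i=\tfrac1{\sqrt k}(t_{1-i}+t_{2-i}+\dots+t_{k-i})$, a length-$k$ sliding window sum of the i.i.d.\ Rademachers. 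Writing $\mathbf t$ for the vector of the $N=m+k-1$ relevant sign variables, this means that, conditionally on $B$,
\[
  \left\|\tfrac1{\sqrt m}TDx\right\|_2^2=\frac1{mk}\,\mathbf t^{\mathsf T}G\,\mathbf t,
\]
where $G=C^{\mathsf T}C$ is the Gram matrix of the $m$ length-$k$ windows ($C$ the $m\times N$ zero/one box-convolution matrix). Here $G\succeq 0$ with $\operatorname{tr}(G)=mk$ (so the conditional expectation is exactly $1$), and the two quantities that matter are $\|G\|_{\mathrm{op}}=\Theta(k^2)$ (upper bound by Cauchy–Schwarz, since every coordinate lies in at most $k$ windows; lower bound by testing on the constant vector) and $\|G\|_F^2=\Theta(mk^3)$ (by counting index pairs within distance $k$). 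Thus the conditional fluctuation $\tfrac1{mk}\mathbf t^{\mathsf T}G\mathbf t-1$ has standard deviation $\Theta(\sqrt{k/m})$, an $\eps$-deviation is a $\Theta(\eps\sqrt{m/k})$-standard-deviation event, and morally $\tfrac1{mk}\mathbf t^{\mathsf T}G\mathbf t$ behaves like the usual $\chi^2$-type JL estimator but with only $\approx m/k$ effective degrees of freedom, so such a deviation should occur with probability $\approx\exp(-\Theta(\eps^2 m/k))$.

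The technical heart is the matching anti-concentration bound: for an absolute constant $C$,
\[
  \Pr_{\mathbf t}\!\left[\left|\frac1{mk}\,\mathbf t^{\mathsf T}G\,\mathbf t-1\right|\ge\eps\right]\ge\exp\!\bigl(-C\eps^2 m/k\bigr).
\]
The usual tools run the wrong way: Hanson–Wright together with hypercontractivity give exactly the matching \emph{upper} bound, whereas we need a \emph{lower} bound on a tail probability, and in a moderate-deviation regime rather than merely in the bulk. I would prove it by decoupling and conditioning. First replace $\mathbf t^{\mathsf T}G\mathbf t$ by the decoupled chaos $\mathbf t^{\mathsf T}G\mathbf t'$ with $\mathbf t'$ an independent copy, which is comparable up to absolute constants in the exponent by the two-sided decoupling inequality. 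Next fix a ``typical'' realisation of $\mathbf t'$ with $\|G\mathbf t'\|_2^2=\Theta(\|G\|_F^2)=\Theta(mk^3)$ and $\|G\mathbf t'\|_\infty=\bigO(k^{3/2}\sqrt{\log m})$ — each holds with constant probability, by Chebyshev (using $\operatorname{tr}(G^4)\le\|G\|_{\mathrm{op}}^2\|G\|_F^2$) and by a union bound over the $N$ rows of $G$ (each row having $\ell_2$-norm $\Theta(k^{3/2})$). Conditionally on such a $\mathbf t'$ the decoupled chaos becomes the linear Rademacher sum $\sum_p(G\mathbf t')_p\,t_p$ whose weight vector $w=G\mathbf t'$ is extremely spread out, $\|w\|_\infty/\|w\|_2=\bigO(\sqrt{(\log m)/m})$, and a Rademacher sum with such spread weights has a Gaussian-type \emph{lower} tail, $\Pr[\sum_p w_p t_p\ge s]\ge c\exp(-C's^2/\|w\|_2^2)$ for every $s$ up to a constant fraction of $\|w\|_2^2/\|w\|_\infty$ — provable by a Paley–Zygmund argument at an appropriate moment, or by exponential tilting. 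Substituting $s=\Theta(\eps mk)$ and $\|w\|_2^2=\Theta(mk^3)$ produces precisely $\exp(-\Theta(\eps^2 m/k))$ (and the required $s$ is within the allowed range since $\eps\lesssim 1$).

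Finally, intersecting the failure event with $B$ gives
\[
  \Pr\!\left[\left|\left\|\tfrac1{\sqrt m}TDx\right\|_2^2-1\right|\ge\eps\right]\ge\Pr[B]\cdot\exp\!\bigl(-C\eps^2 m/k\bigr)\ge 2^{-k}\exp\!\bigl(-C\eps^2 m/k\bigr),
\]
and choosing $k\asymp\eps\sqrt m$ balances the two exponents, making the right-hand side at least $\exp(-c_0\eps\sqrt m)$ for an absolute constant $c_0$. If $m<c\,\eps^{-2}\lg^2(1/\delta)$ for a sufficiently small absolute constant $c$, then $\eps\sqrt m<c_0^{-1}\lg(1/\delta)$, so the displayed probability exceeds $\exp(-\lg(1/\delta))=\delta$, contradicting the hypothesis of the theorem; hence $m=\Omega(\eps^{-2}\lg^2(1/\delta))$. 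I expect the anti-concentration step to be the real work: the decoupling-plus-conditioning reduction and the balancing are routine, but verifying the ``typical $\mathbf t'$'' estimates with enough slack and nailing the constant in the Rademacher-sum lower tail throughout the moderate-deviation range we use is where care is needed, as is checking that the boundary (non-Toeplitz) corrections to $G$ do not affect the norm estimates $\|G\|_{\mathrm{op}}=\Theta(k^2)$, $\|G\|_F^2=\Theta(mk^3)$, and that the regime $k=\bigO(1)$ is cleanly absorbed by the $2$-sparse construction.
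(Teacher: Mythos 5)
Your proposal is a genuinely different and, to my reading, plausibly correct route to the same single-vector anti-concentration statement (the paper's Theorem~\ref{thm:main}). The paper keeps both $T$ and $D$ random, applies Paley--Zygmund directly to $Z_k = (\|TDx/\sqrt m\|_2^2 - 1)^k$, and reduces the moment bounds $\E[Z_k]$, $\E[Z_k^2]$ to counting index tuples $\Gamma_k$, $\Gamma_{2k}$ (with a bijection/signature argument for the lower bound and an explicit encoding argument for the upper bound). You instead \emph{spend} the $D$-randomness up front by conditioning on $B=\{d_1=\dots=d_k\}$, paying $2^{1-k}$, which collapses the degree-$4$ Rademacher chaos in $(T,D)$ to a clean degree-$2$ sliding-window chaos $\frac{1}{mk}\mathbf t^{\mathsf T}G\mathbf t$ in $T$ alone. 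Your norm estimates $\operatorname{tr} G = mk$, $\|G\|_F^2 = \Theta(mk^3)$, $\|G\|_{\mathrm{op}} = \Theta(k^2)$ are correct (boundary corrections to $G$ affect only $O(k)$ rows and change nothing at the $\Theta(\cdot)$ level when $k\ll m$), and the balance $k\asymp\eps\sqrt m$ reproduces the $\exp(-\Theta(\eps\sqrt m))$ exponent. So your reduction trades the paper's combinatorial bookkeeping for a more analytic quadratic-form estimate, at the price of needing an anti-concentration inequality for a degree-$2$ Rademacher chaos rather than a direct moment calculation. Interestingly, your last ingredient (a Gaussian-type \emph{lower} tail for a spread-out Rademacher sum, provable by Paley--Zygmund at moments or by tilting) is, at a distance, the same Paley--Zygmund-at-the-$k$th-moment device the paper applies one level higher.

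The gap, as you yourself flag, is that the central claim
\[
  \Pr_{\mathbf t}\!\left[\left|\tfrac1{mk}\mathbf t^{\mathsf T}G\mathbf t-1\right|\ge\eps\right]\ge\exp(-C\eps^2 m/k)
\]
is only sketched, and this is where essentially all the difficulty lives. The chain decoupling $\to$ conditioning on a typical $\mathbf t'$ $\to$ Rademacher lower tail is a sound plan, but each link needs care: the de~la~Pe\~na--Montgomery-Smith decoupling inequality must be applied to the off-diagonal chaos (one needs to check the diagonal remainder $\sum_p G_{pp}t_p t_p'$ is negligible at the relevant scale, which it is since $k\sqrt N\ll\sqrt{mk^3}$); the ``typical $\mathbf t'$'' bounds $\|G\mathbf t'\|_2^2=\Theta(mk^3)$ and $\|G\mathbf t'\|_\infty=\bigO(k^{3/2}\sqrt{\log m})$ must hold \emph{simultaneously} with probability bounded away from $0$; and the Rademacher lower tail must be verified to extend throughout the regime $s\asymp\eps mk$, which requires $\eps\sqrt{m/k}\lesssim\|w\|_2/\|w\|_\infty\asymp\sqrt{m/\log m}$ --- this does hold under $k\asymp\eps\sqrt m$ for all relevant $(\eps,m)$, but the check must be made explicitly, as must the small-$k$ corner ($\eps\sqrt m=\bigO(1)$) that you defer to the two-sparse construction. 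None of this looks wrong, but it is not yet a proof: the hard lemma has been reduced to a different hard lemma that is left at the level of a plan, while the paper gives a complete, self-contained (if more laborious) combinatorial argument.
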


While Theorem~\ref{thm:highlevelmain} already shows that one cannot
tighten the analysis of Vybíral for preserving the norm of just one
vector, Theorem~\ref{thm:highlevelmain} does leave open the
possibility that one would not need to union bound over all $N^2$
pairs of difference vectors when trying to preserve all pairwise
distances amongst a set of $N$ vectors. It could still be the case
that there somehow was a strong positive correlation between distances
being preserved (though this seems extremely unlikely, and would be
something not seen in any previous approach to JL). To complete the
picture, we indeed show that this is not the case, at least for $N$
somewhat smaller than the dimension $n$:
\begin{theorem}
  \label{thm:highlevelmainMany}
  Let $T$ and $D$ be the $m \times n$ Toeplitz and $n \times n$
  diagonal matrix in the embedding proposed
  by~\cite{Hinrichs:2011:JLLfCM}. For all $0 < \eps < C$, where $C$ is
  a universal constant, if the following holds for every set of $N$
  vectors $X \subset \R^n$:
  \begin{equation*}
    \Pr\left[\forall x,y \in X : \left|\left\|\frac{1}{\sqrt{m}}TDx -
          \frac{1}{\sqrt{m}}TDy\right\|_2^2  - \| x-y\|_2^2\right|  \leq
      \eps\|x-y\|_2^2 \right] = \Omega(1),
  \end{equation*} then it must be the case
that either $m = \Omega(\eps^{-2} \lg^2 N)$ or $m = \Omega(n/N)$.
\end{theorem}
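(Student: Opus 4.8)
\medskip
\noindent\textit{Proof plan.}
The plan is to derive the bound from the single‑vector lower bound (Theorem~\ref{thm:highlevelmain}) by packing $N$ mutually \emph{independent}, individually \emph{hard} copies of its extremal instance into disjoint coordinate windows of $\R^n$. The first step is to open up the proof of Theorem~\ref{thm:highlevelmain} and isolate the hard unit vector $x^\star = x^\star(m,\eps,\delta)$ it uses; the one structural feature I need is that $x^\star$ can be taken to be supported on $n_0 = n_0(m,\eps,\delta)$ \emph{consecutive} coordinates with $n_0 = \bigO(m)$ (I expect it is already supported on its $\bigO(\eps^{-2}\lg^2(1/\delta)) = \bigO(m)$ ``active'' coordinates). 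Translates of a compactly supported $x^\star$ will be the building blocks.

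Assume for contradiction that $m \le c_0\eps^{-2}\lg^2 N$ \emph{and} $m \le c_1 n/N$, for small absolute constants $c_0, c_1$, and that $N$ exceeds a suitable absolute constant (for smaller $N$ the claim reduces to $m = \Omega(\eps^{-2})$, which is Theorem~\ref{thm:highlevelmain} with a constant error probability). Partition $[n]$ into $N$ consecutive windows $W_1,\dots,W_N$ of length $b := \lfloor n/N\rfloor$; since $m \le c_1 n/N$ we have $b \ge n_0 + m$, so into each $W_\ell$ I place a translate $x^{(\ell)}$ of $x^\star$ that occupies only the first $n_0 \le b - m$ coordinates of $W_\ell$, leaving an $m$‑coordinate buffer. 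Put $X := \{0\} \cup \{x^{(1)},\dots,x^{(N-1)}\}$, a set of $N$ unit vectors. Two facts drive the argument. \emph{(i) Equidistribution:} the map $x \mapsto \tfrac1{\sqrt m}TDx$ is translation‑covariant over the index range $[-(m-1),n-1]$ that the construction uses (translating a vector's support merely relabels the i.i.d.\ Rademachers $t_k$ and $d_j$, and our placement keeps every relabelled index in range), so $\bigl\|\tfrac1{\sqrt m}TDx^{(\ell)}\bigr\|_2^2$ has \emph{exactly} the distribution of $\bigl\|\tfrac1{\sqrt m}TDx^\star\bigr\|_2^2$, and Theorem~\ref{thm:highlevelmain} applies to each $x^{(\ell)}$ verbatim. \emph{(ii) Independence:} $\bigl\|\tfrac1{\sqrt m}TDx^{(\ell)}\bigr\|_2^2$ depends only on $\{d_j : j \in W_\ell\}$ and on $\{t_{j-i} : j \in \operatorname{supp}(x^{(\ell)}),\ i \in [m]\}$, and the $m$‑coordinate buffers make the latter runs of $t$‑indices disjoint across distinct $\ell$; hence the events $\tilde G_\ell := \bigl\{\, \bigl|\, \bigl\|\tfrac1{\sqrt m}TDx^{(\ell)}\bigr\|_2^2 - 1\,\bigr| \le \eps \,\bigr\}$ are mutually independent. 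Since each $\|x^{(\ell)}\|_2 = 1$, the event that all pairwise distances of $X$ are preserved is contained in $\bigcap_\ell \tilde G_\ell$ (consider the pairs $\{0,x^{(\ell)}\}$).

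To finish, set $\delta := N^{-1/2}$. From $m \le c_0\eps^{-2}\lg^2 N$ and $\lg(1/\delta) = \tfrac12\lg N$ we get $m < c_2(2\eps)^{-2}\lg^2(1/\delta)$ once $c_0$ is small relative to the constant $c_2$ implicit in Theorem~\ref{thm:highlevelmain}; applying the contrapositive of Theorem~\ref{thm:highlevelmain} with distortion parameter $2\eps$ (legal once the present theorem's universal constant is taken to be at most half of Theorem~\ref{thm:highlevelmain}'s) gives $\Pr\bigl[\,\bigl|\,\bigl\|\tfrac1{\sqrt m}TDx^{(\ell)}\bigr\|_2^2 - 1\,\bigr| < 2\eps\,\bigr] \le 1 - \delta$, and since $\tilde G_\ell$ sits inside this event, $\Pr[\tilde G_\ell] \le 1 - N^{-1/2}$. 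By independence,
\[
  \Pr\bigl[\text{all pairwise distances of } X \text{ preserved}\bigr]
  \;\le\; \prod_{\ell=1}^{N-1}\Pr[\tilde G_\ell]
  \;\le\; \bigl(1 - N^{-1/2}\bigr)^{N-1}
  \;\le\; e^{-\sqrt N/2} = o(1),
\]
contradicting the hypothesis that this probability is $\Omega(1)$.

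The step I expect to be the genuine obstacle is exactly the structural input: verifying that the proof of Theorem~\ref{thm:highlevelmain} goes through with an extremal vector supported on $\bigO(n/N)$ consecutive coordinates. If the natural extremal vector were instead spread over $\Theta(n)$ coordinates, the $N$ disjoint copies would not fit, and one would have to re‑run Vybíral's second‑moment / heavy‑tail estimate directly on the interleaved family $\{x^{(\ell)}\}$, this time controlling the (mild) dependence coming from overlapping $t$‑indices. If, as I anticipate, the hard instance already lives on $\Theta(m)$ coordinates, then the ``$m = \Omega(n/N)$'' alternative in the statement is precisely the room required to embed the $N$ buffered copies inside $\R^n$, and nothing more delicate is needed.
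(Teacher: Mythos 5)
Your proposal is correct and is essentially the paper's own argument. The paper (Section~\ref{sec:lower-bound-n-vectors}) does exactly what you describe: it takes the hard unit vector $x$ from Theorem~\ref{thm:main} — which, as you anticipated, is supported on a prefix of consecutive coordinates, in fact only the first $O(\sqrt{m})$ of them rather than $O(m)$ — and forms $X := \{\mathbf{0}, x, x_{\rightarrow m+C\sqrt m}, x_{\rightarrow 2(m+C\sqrt m)}, \ldots\}$, i.e.\ shifted copies spaced by a buffer of $m + O(\sqrt m)$ coordinates, exactly your ``window plus buffer'' construction. It makes the same two observations (equidistribution under translation, independence once the runs of $t$-indices are disjoint), multiplies the per-vector failure bound $2^{-O(\eps\sqrt m)}$ across the independent copies to get success probability at most $\exp(-N\,2^{-O(\eps\sqrt m)})$, and then reads off the dichotomy $m = \Omega(\eps^{-2}\lg^2 N)$ or $m = \Omega(n/N)$ from whether the construction fits in $\R^n$. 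The only cosmetic differences are that the paper argues directly from the $2^{-O(\eps\sqrt m)}$ tail bound of Theorem~\ref{thm:main} rather than introducing $\delta = N^{-1/2}$ and applying a contrapositive of Theorem~\ref{thm:highlevelmain}, and that the paper's support bound of $O(\sqrt m)$ is tighter than the $O(m)$ you allowed for — but as you correctly noted, $O(m)$ already suffices since it only perturbs the constant in the $m = \Omega(n/N)$ alternative.
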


We remark that our proofs also work if we replace $T$ be a partial
circulant matrix (which was also proposed
in~\cite{Hinrichs:2011:JLLfCM}). Furthermore, we expect that minor
technical manipulations to our proof would also show the above
theorems when the entries of $T$ and $D$ are $\mathcal{N}(0,1)$
distributed rather than Rademacher (this was also proposed
in~\cite{Hinrichs:2011:JLLfCM}).

\section{Lower Bound for One Vector}
\label{sec:onevector}
Let $T$ be $m \times n$ Toeplitz matrix defined from random variables
$t_{-(m-1)},t_{-(m-2)}, \dots, t_{n-1}$ such that entry
$(i,j)$ takes values $t_{j-i}$ for $i =1,\dots,m$ and
$j=1,\dots,n$. Let $D$ be an $n \times n$ diagonal matrix with the
random variable $d_i$ giving the $i$'th diagonal entry. This section
shows the following:
\begin{theorem}\label{thm:main}
  Let $T$ be $m \times n$ Toeplitz and $D$ $n \times n$ diagonal. If
  $t_{-(m-1)},t_{-(m-2)}, \dots, t_{n-1}$ and
  $d_1,\dots,d_n$ are independently distributed Rademacher random
  variables for $i=-(m-1),\dots,n-1$ and $j=1,\dots,n$, then for all
  $0 < \eps < C$, where $C$ is a universal constant, there exists a
  unit vector $x \in \R^n$ such that
  \begin{equation*}
    \Pr\left[\left|\left\|\frac{1}{\sqrt{m}}TDx\right\|_2^2 - 1\right| > \eps\right] \geq 2^{-\bigO(\eps \sqrt{m})}.
  \end{equation*}
  and furthermore, all but the first $O(\sqrt{m})$ coordinates of $x$
  are $0$.
\end{theorem}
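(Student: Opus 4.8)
The plan is to take $x$ flat on a short prefix $\{1,\dots,q\}$, to \emph{freeze} the diagonal signs $d_1,\dots,d_q$ on that prefix (at cost $2^{-q}$), and then to extract the required $\eps$-deviation from the \emph{intrinsic} fluctuations of the Rademacher quadratic form in $t$ that remains; the prefix length $q$ is tuned so that the freezing cost balances the deviation cost. (Throughout I assume $\eps\sqrt m\ge 2$; the range $\eps\sqrt m=\bigO(1)$ is handled by $x=\tfrac1{\sqrt2}(e_1+e_2)$, for which $\|\tfrac1{\sqrt m}TDx\|_2^2-1=\tfrac{d_1d_2}{m}\sum_\ell t_\ell t_{\ell+1}$ and $\sum_\ell t_\ell t_{\ell+1}$ is distributed as an unbiased $\pm1$ walk of length $\approx m$, so a local CLT bound makes $\Pr[\,|\cdot|>\eps\,]$ at least a constant when $\eps m\le\sqrt m$ and at least $1-\bigO(\eps\sqrt m)$ otherwise, in either case $\ge 2^{-\bigO(\eps\sqrt m)}$.)

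Concretely, set $q:=\lceil\eps\sqrt m\rceil$ and $x:=q^{-1/2}(1,\dots,1,0,\dots,0)$ with $q$ leading ones, so $\operatorname{supp}(x)\subseteq\{1,\dots,q\}$ with $q=\bigO(\sqrt m)$. Condition on $E=\{d_1=\dots=d_q=1\}$, of probability $2^{-q}$. On $E$ one has $Dx=q^{-1/2}\mathbf 1_{\{1,\dots,q\}}$ deterministically, hence
\[
\Big\|\tfrac1{\sqrt m}TDx\Big\|_2^2=\frac1{mq}\sum_{i=1}^m S_i^2,\qquad S_i:=\sum_{j=1}^q t_{j-i},
\]
a normalised sum of squares of length-$q$ sliding-window sums of the (still i.i.d.\ Rademacher) sequence $t$. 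Writing $\sum_i S_i^2=t^\top N t$ with $N=\sum_{i=1}^m w_iw_i^\top$ and $w_i$ the $0/1$ indicator of the $i$-th window, $N$ is a banded Toeplitz matrix with $N_{\ell\ell'}=(q-|\ell-\ell'|)_+$ up to boundary terms; its symbol is $|\sum_{j=1}^q e^{ij\theta}|^2$, peaked at $\theta=0$, and a short computation gives $\operatorname{tr}N=mq$, $\|N\|_F^2=\Theta(mq^3)$, $\|N\|_{\mathrm{op}}=\Theta(q^2)$.

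The heart of the proof is a lower bound on the \emph{upper} tail of $t^\top N t$ matching the Hanson--Wright upper bound in its sub-Gaussian regime, i.e.\ for $s\lesssim\|N\|_F^2/\|N\|_{\mathrm{op}}$,
\[
\Pr\!\big[\,t^\top N t-\operatorname{tr}N>s\,\big]\ \ge\ c\exp\!\big(-Cs^2/\|N\|_F^2\big).
\]
I would get this by standard decoupling: $t^\top N t-\operatorname{tr}N=t^\top\tilde N t$ with $\tilde N=N-\operatorname{diag}N$ zero-diagonal, which (two-sided, up to constants) compares with $\langle 2\tilde N t',t\rangle$ for an independent Rademacher copy $t'$; since $\|2\tilde N t'\|_2^2=4\,t'^\top\tilde N^2 t'$ has mean $\approx 4\|N\|_F^2$ and concentrates, one invokes the classical lower tail estimate $\Pr[\langle a,t\rangle>u]\ge c\exp(-Cu^2/\|a\|_2^2)$ for linear Rademacher forms in their Gaussian window $u\lesssim\|a\|_2^2/\|a\|_\infty$ (here $\|\tilde N t'\|_\infty\le$ the max row-sum of $N$, which is $\Theta(q^2)$, so the window reaches $u\approx mq$). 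Applying this with $s=\eps mq$, where $s^2/\|N\|_F^2=\Theta(\eps^2m/q)$ and $s\lesssim\|N\|_F^2/\|N\|_{\mathrm{op}}$ because $\eps$ is below a universal constant, yields $\Pr[\,\tfrac1{mq}\sum_i S_i^2>1+\eps\mid E\,]\ge 2^{-\Theta(\eps^2m/q)}$.

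Combining the two pieces and using that $q=\Theta(\eps\sqrt m)$ minimises $q+\eps^2m/q$,
\[
\Pr\!\Big[\Big|\Big\|\tfrac1{\sqrt m}TDx\Big\|_2^2-1\Big|>\eps\Big]\ \ge\ \Pr[E]\cdot\Pr\!\Big[\tfrac1{mq}\textstyle\sum_i S_i^2>1+\eps\ \Big|\ E\Big]\ \ge\ 2^{-q}\cdot 2^{-\Theta(\eps^2m/q)}\ =\ 2^{-\bigO(\eps\sqrt m)},
\]
which is the claim, with $\operatorname{supp}(x)\subseteq\{1,\dots,q\}$ and $q=\bigO(\sqrt m)$. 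The main obstacle is the displayed tail lower bound: the Hanson--Wright \emph{upper} bound is routine, but its tightness in the sub-Gaussian regime for Rademacher (not Gaussian) chaos needs the decoupling/linearisation step — or, alternatively, a hands-on argument via $t^\top N t-\operatorname{tr}N=2\sum_{r=1}^{q-1}(q-r)A_r$ with $A_r=\sum_\ell t_\ell t_{\ell+r}$, using that each $A_r$ is an unbiased $\pm1$ walk of length $\approx m$, that the $A_r$ are pairwise uncorrelated, and a moderate-deviation estimate for the near-Gaussian combination $\sum_r(q-r)A_r$. A secondary subtlety is that conditioning on $E$ does not change $\operatorname{tr}N$ (the target mean stays $1$), so one truly needs the full-rank fluctuation: restricting to an independent sub-collection of windows to buy independence would shrink the mean by a factor $\Theta(q)$ and is hopelessly lossy.
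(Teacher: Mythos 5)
Your proposal takes a genuinely different route from the paper, although it starts from the same test vector (a flat unit vector supported on a prefix of length $\Theta(\eps\sqrt m)$). The paper keeps both $t$ and $d$ random throughout, computes the moments $\E[Z_k]$ and $\E[Z_k^2]$ exactly by reducing them to a combinatorial count $\Gamma_k$ of tuples in $([m]\times[s]\times[s])^k$ with parity constraints on columns and diagonals, bounds $\Gamma_k$ from below by pairing ``half-tuples'' via Cauchy--Schwarz and $\Gamma_{2k}$ from above by an explicit bit-encoding argument, and finally invokes Paley--Zygmund on $Z_k$. You instead neutralise $d$ at the outset by conditioning on $d_1=\dots=d_q=1$ at a quantifiable cost $2^{-q}$, which leaves a pure Rademacher chaos $t^\top N t$ with $N$ a banded Toeplitz (Fej\'er-type) PSD matrix, and then pay for the $\eps$-deviation via an anti-concentration (Hanson--Wright lower tail) bound for that chaos; the two exponential costs are balanced by the choice $q=\Theta(\eps\sqrt m)$. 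The quantitative bookkeeping in your sketch is correct: $\operatorname{tr}N=mq$, $\|N\|_F^2=\Theta(mq^3)$, $\|N\|_{\mathrm{op}}=\Theta(q^2)$, the requested deviation $s=\eps mq$ sits inside the sub-Gaussian window $s\lesssim\|N\|_F^2/\|N\|_{\mathrm{op}}=\Theta(mq)$ precisely because $\eps$ is below a universal constant, and $2^{-q}\cdot\exp(-\Theta(\eps^2m/q))=2^{-\Theta(\eps\sqrt m)}$ at $q=\Theta(\eps\sqrt m)$. What the paper's argument buys is that it is entirely self-contained and elementary (Paley--Zygmund plus counting/encoding); what yours buys is a transparent explanation of where the $\eps\sqrt m$ exponent comes from and a modular reduction to a single well-isolated analytic ingredient. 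That ingredient --- a Gaussian-matching lower tail for zero-diagonal Rademacher chaos in its sub-Gaussian regime --- is correctly flagged by you as the crux. Your decoupling route is sound: the de la Pe\~na--Montgomery-Smith two-sided tail decoupling reduces $t^\top\tilde N t$ to the bilinear form $\langle 2\tilde N t',t\rangle$, and after conditioning $t'$ to the $\Omega(1)$-probability event that $\|\tilde N t'\|_2^2\gtrsim\|N\|_F^2$ (while deterministically $\|\tilde N t'\|_\infty\lesssim q^2$, so the linear Gaussian window reaches $u\approx mq$), one invokes the classical reverse-Hoeffding lower tail for linear Rademacher forms. This is correct but genuinely non-elementary --- if you want a fully self-contained write-up you must either cite and apply the two-sided decoupling inequality carefully or carry out the hands-on moderate-deviation argument for $\sum_r(q-r)A_r$ that you mention as an alternative. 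The remaining small details (the $\eps\sqrt m=\bigO(1)$ edge case via $x=\tfrac1{\sqrt2}(e_1+e_2)$ and the implicit requirement $q\le n$) are handled or trivially patchable.
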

It follows from Theorem~\ref{thm:main} that if we want to have probability at
least $1-\delta$ of preserving the norm of \emph{any} unit vector $x$
to within $(1 \pm \eps)$, it must be the case that
$\eps \sqrt{m} = \Omega(\lg(1/\delta))$, i.e.\@
$m = \Omega(\eps^{-2} \lg^2(1/\delta))$. This is precisely the
statement of Theorem~\ref{thm:highlevelmain}. Thus we set out to prove
Theorem~\ref{thm:main}.

To prove Theorem~\ref{thm:main}, we wish to invoke the Paley-Zygmund
inequality, which states, that if $X$ is a non-negative random
variable with finite variance and $0 \leq \theta \leq 1$, then
\begin{equation*}
  \Pr [X > \theta \E[X]] \geq (1 - \theta)^2 \frac{\E^2[X]}{\E[X^2]}.
\end{equation*}

We carefully choose a unit vector $x$, and define the random variable
for Paley-Zygmund to be the $k$'th moment of the difference between
the norm of
$x$ transformed and 1.
\begin{proof}
  Let $k$ be an even positive integer less than $m/4$ and define
  $s := 4k$. Note that $s \leq m$. Let $x$ be an arbitrary
  $n$-dimensional unit vector such that the first $s$ coordinates are
  in $\{-1/\sqrt{s}, +1/\sqrt{s}\}$, while the remaining $n - s$
  coordinates are 0. Define the random variable parameterized by $k$
  \begin{equation*}
    Z_k := \left(\left\|\frac{1}{\sqrt{m}} TDx\right\|_2^2 - 1 \right)^k.
  \end{equation*}
  Since $k$ is even, the random variable $Z_k$ is non-negative.

  We wish to lower bound $\E[Z_k]$ and upper bound $\E[Z_k^2]$ in order to
  invoke Paley-Zygmund. The bounds we prove are as follows:
  \begin{lemma}
    \label{lem:boundZ}
    If $k \leq \sqrt{m}$, then the random variable $Z_k$ satisfies:
    \begin{flalign*}
      && \E[Z_k] &\geq m^{-k/2} k^k 2^{-\bigO(k)} & \\
      \text{and} && \E[Z_k^2] &\leq m^{-k} k^{2k} 2^{\bigO(k)}. &
    \end{flalign*}
  \end{lemma}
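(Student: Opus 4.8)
The plan is to reduce both inequalities to counting lattice configurations and to estimate those counts. Write $Y := \|m^{-1/2}TDx\|_2^2 - 1$. Since $x$ is supported on its first $s=4k$ coordinates with $|x_j|=1/\sqrt s$, the sign of each $x_j$ can be absorbed into $d_j$, so we may take $x_j=1/\sqrt s$; then $(TDx)_\ell=\tfrac{1}{\sqrt s}\sum_{j=1}^s t_{j-\ell}d_j$, and hence $Y=\tfrac{2}{ms}\sum_{\ell=1}^m\sum_{1\le j<j'\le s} t_{j-\ell}t_{j'-\ell}\,d_jd_{j'}$. Expanding $Y^r$ and taking expectations, a monomial indexed by a tuple $\big((\ell_i,j_i,j_i')\big)_{i=1}^r$ has nonzero expectation exactly when every value occurring among the \emph{$t$-indices} $\{j_i-\ell_i,\,j_i'-\ell_i\}_{i\le r}$, and every value among the \emph{$d$-indices} $\{j_i,\,j_i'\}_{i\le r}$, occurs an even number of times, in which case that monomial contributes exactly $(2/(ms))^r>0$. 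Since $k$ is even, $\E[Z_k]=\E[Y^k]$ and $\E[Z_k^2]=\E[Y^{2k}]$ are genuine sums of nonnegative terms, so (using $s=4k$) the lemma is equivalent to $|\mathcal{V}_k|\ge m^{k/2}k^{2k}2^{-\bigO(k)}$ and $|\mathcal{V}_{2k}|\le m^{k}k^{4k}2^{\bigO(k)}$, where $\mathcal{V}_r$ is the set of ``valid'' $r$-tuples just described.

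For the lower bound I would exhibit an explicit large sub-family of $\mathcal{V}_k$. The target dictates the structure: the support indices $j_i,j_i'$ should remain essentially free (contributing about $\binom{s}{2}^k=k^{2k}2^{\bigO(k)}$ choices) while only about $k/2$ of the $m$-valued shift indices $\ell_i$ are free (contributing $m^{k/2}$). I would achieve this by choosing a pattern that specifies, for pairs or small groups of blocks, how their $d$-indices coincide (this is what couples some of the $j_i,j_i'$) and how their $t$-indices coincide, arranged so that the $d$-side coincidences are automatically produced by the choice of the $j$'s while the $t$-side coincidences are forced by tying together the corresponding shifts; a careful choice of the pattern keeps the $j$'s nearly unconstrained yet leaves only $\approx k/2$ independent shifts. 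One then checks that the induced linear relations among the chosen values are simultaneously solvable for a $2^{-\bigO(k)}$ fraction of the choices, which gives $|\mathcal{V}_k|\ge m^{k/2}\binom{s}{2}^{k}2^{-\bigO(k)}$ and hence $\E[Z_k]\ge m^{-k/2}k^{k}2^{-\bigO(k)}$.

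For the upper bound I would classify the tuples of $\mathcal{V}_{2k}$ by their \emph{coincidence shape} — the partition of the $4k$ $d$-indices into classes of equal value together with the analogous partition of the $4k$ $t$-indices — and bound $|\mathcal{V}_{2k}|$ by (number of shapes) times (maximum number of tuples realizing a fixed shape). The dominant shapes are those in which both partitions are perfect matchings, of which there are only $2^{\bigO(k)}$ up to the unavoidable multinomial factors. For a fixed shape, the $d$-indices lie in $[s]=[4k]$ and must be consistent with their partition, contributing at most $k^{4k}2^{\bigO(k)}$ choices; and because the $t$-indices are determined by the $d$-indices together with the shifts $(\ell_1,\dots,\ell_{2k})$, the $t$-partition forces the shift vector to be constant on the connected components of an auxiliary ``block graph,'' so there are at most $m^{\#\text{components}}\le m^{k}$ admissible shift vectors, subject moreover to one linear consistency condition on the $d$-indices for each independent cycle of that graph. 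Since each such cycle condition removes a factor $s=\Theta(k)$ from the $d$-index count while each extra free component adds a factor $m$ to the shift count, the trade-off is exactly balanced, and summing over shapes yields $|\mathcal{V}_{2k}|\le m^{k}k^{4k}2^{\bigO(k)}$, i.e.\ $\E[Z_k^2]\le m^{-k}k^{2k}2^{\bigO(k)}$.

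The principal difficulty, on both sides, is pinning down the polynomial-in-$k$ exponents, since the crude versions of these arguments overshoot by a factor $k^{\Theta(k)}$: for instance, conditioning on $D$ and applying hypercontractivity to the resulting order-$2$ Rademacher chaos in $T$ gives the right $m$-dependence but only $\E[Z_k^2]\le m^{-k}k^{3k}2^{\bigO(k)}$. The delicate points are therefore (i) arranging the lower-bound family so that the $m^{k/2}$ and the $k^{2k}$ are realized \emph{simultaneously} — it is precisely the tension between the $d$-parities, which want to tie the support indices of different blocks together (eroding the $k^{2k}$), and the $t$-parities, which want to free up shifts, that has to be resolved — and (ii) checking, on the upper-bound side, that the cycle-consistency conditions cut the $d$-index count by exactly the amount needed to cancel the gain in free shifts, so that shapes with many free components do not inflate the bound super-exponentially.
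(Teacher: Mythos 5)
Your reduction to a counting problem is exactly the paper's: expand the $k$'th moment, note that a term survives only if every $d$-index and every $t$-index (diagonal) appears an even number of times, and reduce both bounds to counting such tuples. But the lemma's entire content lies in the two counts, and on both you have only a plan, not a proof. For the lower bound, the sentences ``a careful choice of the pattern keeps the $j$'s nearly unconstrained yet leaves only $\approx k/2$ independent shifts'' and ``one then checks that the induced linear relations \ldots are simultaneously solvable for a $2^{-\bigO(k)}$ fraction of the choices'' are precisely the assertions that need proof; no pattern is exhibited and no solvability argument is given. The paper sidesteps the need for an explicit pattern altogether: it builds valid tuples from two halves $S_1,S_2\in\mathbb{S}$ that touch every column/diagonal at most once and have identical \emph{signatures} (sets of touched columns and diagonals), and applies Cauchy--Schwarz to get $\Gamma_k \geq |\mathbb{S}|^2/B$, where $|\mathbb{S}|=\Omega(m^{k/2}s^k2^{-k})$ and the number of signatures is $B=\bigO\bigl(\binom{m+s}{k/2}s^{k/2}\binom{s}{k}\bigr)$; both of these are easy counts, and the tension you describe between $d$-parities and $t$-parities never has to be resolved explicitly.

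For the upper bound, your accounting is quantitatively wrong as stated. The number of perfect matchings of the $4k$ diagonal-slots is $(4k-1)!!=k^{2k}2^{\Theta(k)}$, not $2^{\bigO(k)}$, and the claimed ``exact balance'' fails: each additional component of the block graph gains a factor $m$ in free shifts, while each cycle-consistency condition removes only a factor $s=4k\leq 4\sqrt{m}$ from the column count, so passing from $c$ to $c+1$ components multiplies your bound by $m/s\geq k/4$. Carrying out your scheme literally (matchings $k^{2k}$, columns $s^{4k-c}$, shifts $m^{c}$, maximized at $c=k$) yields only $|\mathcal{V}_{2k}|\leq m^k k^{5k}2^{\bigO(k)}$, i.e.\ $\E[Z_k^2]\leq m^{-k}k^{3k}2^{\bigO(k)}$ --- exactly the weak bound you acknowledge must be beaten. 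The missing factor $k^{-k}$ comes from counting the matching \emph{jointly} with its component structure: the number of pairings whose block graph has $c$ cycles is only about $k^{2k-c}2^{\bigO(k)}$, and one must also save one column per cycle. The paper implements precisely this bookkeeping via an encoding argument: only one ``head'' triple per cycle pays $\lg m$ bits for its row, every other triple pays a $\lg k$-bit predecessor pointer instead, and the cycle-closing ``last'' triple stores only one column, giving $m^{c}s^{4k-c}k^{2k-c}\leq m^{k}s^{2k}k^{2k}2^{\bigO(k)}$ once $k\leq\sqrt{m}$. Without this refinement (or an equivalent one), neither inequality of the lemma is established by your argument.
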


  Before proving Lemma~\ref{lem:boundZ} we show how to use it together
  with Paley-Zygmund to complete the proof of Theorem~\ref{thm:main}.

  We start by invoking Paley-Zygmund and then rewriting the expectations
  according to Lemma~\ref{lem:boundZ},
  \begin{align*}
    \Pr[Z_k > \E[Z_k]/2] &\geq (1/4)\frac{\E^2[Z_k]}{\E[Z_k^2]} \implies \\
    \Pr[Z_k^{1/k} > (\E[Z_k]/2)^{1/k}] &\geq (1/4)\frac{\E^2[Z_k]}{\E[Z_k^2]} \implies \\
    \Pr\left[\left|\left\|\frac{1}{\sqrt{m}} TDx\right\|_2^2 - 1 \right| >
    \frac{k}{C_0 \sqrt{m}}\right] &\geq 2^{-\bigO(k)}.
  \end{align*}
  Here $C_0$ is some constant greater than $0$.  For any $0<\eps<1/C_0$,
  we can now set $k$ such that $k/(C_0 \sqrt{m}) = \eps$, i.e.\@ we choose
  $k = \eps C_0 \sqrt{m}$. This choice of $k$ satisfies
  $k \leq \sqrt{m}$ as required by Lemma~\ref{lem:boundZ}. We have thus shown
  that:
  \begin{align*}
    \Pr\left[\left|\left\|\frac{1}{\sqrt{m}} TDx\right\|_2^2 - 1 \right| >
    \eps \right] &\geq 2^{-\bigO(\eps \sqrt{m})}.
  \end{align*}
\end{proof}

\begin{remark}
  Theorem~\ref{thm:main} can easily be extended to partial circulant
  matrices.  The difference between partial circulant and Toeplitz
  matrices is the dependence between the values in the first $m$ and
  last $m$ columns. However, as only the first
  $s = 4k \leq 4 \sqrt{m}$ entries in $x$ are nonzero, the last $m$
  columns are ignored, and so partial circulant and Toeplitz matrices
  behave identically in our proof.
\end{remark}

\begin{proof}[Proof of Lemma~\ref{lem:boundZ}]
  Before we prove the two bounds in Lemma~\ref{lem:boundZ} individually, we
  rewrite $\E[Z_k]$, as this benefits both proofs.
  \begin{align*}
    \E[Z_k] &= \E\left[\left(\left\|\frac{1}{\sqrt{m}} TDx\right\|_2^2 - 1 \right)^k\right] \\
            &= \E\left[\left(\left(\frac{1}{m}\sum_{i=1}^m \left(\sum_{j=1}^n t_{j-i} d_j x_j\right)^2 \right)- 1 \right)^k\right] \\
            &= \E\left[\left(\left(\frac{1}{m}\sum_{i=1}^m \left(\left(\sum_{j=1}^n t^2_{j-i} d^2_j x^2_j\right) + \left(\sum_{j=1}^n \sum_{h \in \{1,\dots,n\} \setminus \{j\}} t_{j-i}t_{h-i} d_j d_h x_j x_h  \right) \right)\right)- 1 \right)^k\right] \\
            &= \E\left[\left(\frac{1}{m}\sum_{i=1}^m \left(\left(\sum_{j=1}^n t^2_{j-i} d^2_j x^2_j - x_j^2\right) + \left(\sum_{j=1}^n \sum_{h \in \{1,\dots,n\} \setminus \{j\}} t_{j-i}t_{h-i} d_j d_h x_j x_h  \right) \right)\right)^k\right] \\
            &= \E\left[\left(\frac{1}{m}\sum_{i=1}^m \sum_{j=1}^n \sum_{h \in \{1,\dots,n\} \setminus \{j\}} t_{j-i}t_{h-i} d_j d_h x_j x_h \right)^k\right] \\
            &= \frac{1}{m^{k}} \sum_{S \in ([m] \times [n] \times [n])^k \mid \forall (i,j,h) \in S : h \neq j} \E\left[ \prod_{(i,j,h) \in S} t_{j-i}t_{h-i}d_j d_h x_j x_h \right]
  \end{align*}
  Observe that for $j > s$ or $h > s$ the product becomes $0$, as
  either $x_j$ or $x_h$ is 0. By removing all these terms, we simplify the sum to
  \begin{align*}
    \E[Z_k] &= \frac{1}{m^{k}} \sum_{S \in ([m] \times [s] \times [s])^k \mid \forall (i,j,h) \in S : h \neq j} \E\left[ \prod_{(i,j,h) \in S} t_{j-i}t_{h-i}d_j d_h x_j x_h \right]
  \end{align*}

  Observe for an $S \in ([m] \times [s] \times [s])^k$, that the value
  $\E\left[ \prod_{(i,j,h) \in S} t_{j-i}t_{h-i}d_j d_h x_j x_h \right]$
  is $0$ if one of the following two things are true:
  \begin{itemize}
  \item A $d_j$ occurs an odd number of times in the product.
  \item A variable $t_a$ occurs an odd number of times in the product.
  \end{itemize}
  To see this, note that by the independence of the random variables, we
  can write the expectation of the product, as a product of expectations
  where each term in the product has all the occurrences of the same
  random variable. Since the $d_j$'s and $t_a$'s are Rademachers, the
  expectation of any odd power of one of these random variables is
  $0$. Thus if just a single random variable amongst the $d_j$'s and
  $t_a$'s occurs an odd number of times, we have
  $\E\left[ \prod_{(i,j,h) \in S} t_{j-i}t_{h-i}d_j d_h x_j x_h
  \right]=0$. Similarly, we observe that if every random variable occurs
  an even number of times, then the expectation of the product is
  exactly $1/s^k$ since each $x_j$ also occurs an even number of
  times. If $\Gamma_k$ denotes the number of tuples
  $S \in ([m] \times [s] \times [s])^k$ such that
  $\forall (i,j,h) \in S$ we have $h \neq j$ and furthermore:
  \begin{itemize}
  \item For all columns $a \in [s]$, $|\{(i,j,h) \in S \mid  j=a \vee h=a\}| \mod 2 = 0$.
  \item For all diagonals $a \in \{-(m-1),\dots,s-1\}$, $|\{(i,j,h) \in S \mid  j-i=a \vee h-i=a\}| \mod 2 = 0$.
  \end{itemize}
  Then we conclude
  \begin{equation}
    \label{eq:e_zk_rewrite}
    \E[Z_k] = \frac{\Gamma_k}{s^k m^k}.
  \end{equation}

  Note that $Z_k^2 = Z_{2k}$. Therefore,
  \begin{equation}
    \label{eq:e_z2k_rewrite}
    \E[Z_k^2] = \E[Z_{2k}] = \frac{\Gamma_{2k}}{s^{2k} m^{2k}}.
  \end{equation}

  To complete the proof of Lemma~\ref{lem:boundZ} we need lower and upper
  bounds for $\Gamma_k$ and $\Gamma_{2k}$. The bounds we prove are
  \begin{lemma}
    \label{lem:boundGamma}
    If $k \leq \sqrt{m}$, then $\Gamma_k$ and $\Gamma_{2k}$ satisfy:
    \begin{flalign*}
      && \Gamma_k &= m^{k/2} s^k k^k 2^{-\bigO(k)} & \\
      \text{and} && \Gamma_{2k} &= m^{k} s^{2k} k^{2k} 2^{\bigO(k)}. &
    \end{flalign*}
  \end{lemma}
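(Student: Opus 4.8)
The plan is to evaluate $\Gamma_k$ (and, by the same method, $\Gamma_{2k}$) as a sum over ``graph shapes''. To each admissible tuple $S=((i_\ell,j_\ell,h_\ell))_{\ell=1}^k$ I attach a bipartite multigraph $B$ with $2k$ edges: the $\ell$-th triple contributes an edge with column endpoint $j_\ell$ and diagonal endpoint $j_\ell-i_\ell$, and an edge with column endpoint $h_\ell$ and diagonal endpoint $h_\ell-i_\ell$; column endpoints lie in $[s]$, diagonal endpoints in $\{-(m-1),\dots,s-1\}$, and the two edges of a triple share the quantity $i_\ell=(\text{column})-(\text{diagonal})$. The defining conditions of $\Gamma_k$ say precisely that every column vertex and every diagonal vertex occurring in $B$ has even degree (hence degree $\geq 2$, so there are at most $k$ distinct columns $P$ and at most $k$ distinct diagonals $Q$), and that the two edges of each triple have distinct column endpoints. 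Therefore
\[
  \Gamma_k=\sum_{(B,\pi)}N(B,\pi),
\]
where $(B,\pi)$ runs over the isomorphism types of such $B$ together with the pairing $\pi$ of the $2k$ (ordered) edges into $k$ triples, and $N(B,\pi)$ counts the labelings: an assignment of distinct column values in $[s]$ and distinct diagonal values in $\{-(m-1),\dots,s-1\}$ satisfying the $k$ linear equations ``$(\text{col }e_1)-(\text{diag }e_1)=(\text{col }e_2)-(\text{diag }e_2)$'' coming from the two edges $e_1,e_2$ of each triple. For a fixed shape $N(B,\pi)=2^{\bigO(k)}s^{a}m^{b}$, where $a$ is the number of free labeling parameters ranging over $[s]$ and $b$ the number ranging over the length-$(m+s-1)$ diagonal interval; here $s=4k\leq 4\sqrt m\leq m$ is used so that shifts of size $O(s)$ and failures of distinctness cost only a $2^{\bigO(k)}$ factor.

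The next step is to locate the main term. First, $b\leq k/2$ always: projecting the $k$ triple-equations onto the diagonal coordinates gives the incidence vectors of the loopless multigraph $\mathcal D$ on the $Q$ diagonal vertices that has one edge per triple; since each diagonal vertex has degree $\geq 2$ in $B$ it has degree $\geq 2$ in $\mathcal D$, so every component of $\mathcal D$ has $\geq 2$ vertices and hence $\mathcal D$ has $\leq Q/2\leq k/2$ components, which bounds $b$. Also $a\leq P\leq k$, so every shape contributes at most $2^{\bigO(k)}s^{k}m^{k/2}$. Matching the target bound $m^{k/2}s^kk^k$ then comes from the shapes with $P=Q=k$ (all multiplicities exactly two) in which the diagonal pairing \emph{couples} the triples into $k/2$ pairs sharing a slope while the column pairing is otherwise free: for these $a=b=k/2$, so $N(B,\pi)=2^{\bigO(k)}s^{k/2}m^{k/2}$, and there are $2^{\bigO(k)}k^{3k/2}$ such shapes ($k^{k/2}$ pairings of the triples into couples, $k^{k}$ column pairings, and a $2^{\bigO(k)}$ choice of fine structure), giving $2^{\pm\bigO(k)}k^{3k/2}s^{k/2}m^{k/2}=2^{\pm\bigO(k)}m^{k/2}s^kk^k$ since $s=4k$.

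For the lower bound I would realize this family explicitly: fix a pairing of $[k]$ into $k/2$ couples, a coupling type for each, a column pairing avoiding within-triple pairs, an independent slope per couple drawn from the bulk of $[m]$ (the partner slope is then forced and still lies in $[m]$ since it differs by a column difference of size $O(s)=O(\sqrt m)$), and finally the half of the column values not pinned down by the resulting $k/2$ compatibility equations. A short argument --- a rank computation showing the compatibility system behaves as expected for a $2^{-\bigO(k)}$-fraction of column pairings, inclusion--exclusion to pass between ``even multiset'' and ``perfect matching'', and a union bound (using $k\leq\sqrt m$) guaranteeing that the constructed columns and diagonals come out distinct --- shows this family has size $\geq m^{k/2}s^kk^k2^{-\bigO(k)}$ and consists of admissible tuples, which is the lower bound; running the construction with $2k$ triples (still $s=4k$, with $k\leq\sqrt m$ keeping slopes in a range $\gg s$) gives the lower bound on $\Gamma_{2k}$.

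The upper bound is the real work, and the obstacle is keeping the sum over shapes within a $2^{\bigO(k)}$ factor, because the number of shapes with a given $P$ or $Q$ can itself be $k^{\Omega(k)}$. The mechanism is that every deviation from the main term is paid for: replacing $p$ column vertices by a single vertex of degree $2p$ loses a factor $s^{p-1}$ in $N(B,\pi)$, while a set-partition count --- handled cleanly through the exponential-formula estimate $\sum s^{P}\leq 2^{\bigO(k)}(2k-1)!!\,s^k$, valid because for $s=4k$ the coefficient $[z^{2k}]\exp(s(\cosh z-1))$ is $2^{\bigO(k)}(s/2)^k/k!$ --- shows only $(k/24)^t/t!$ times as many such structures; likewise every independent cycle in $B$ beyond the main pattern either adds a compatibility equation or lowers $b$, costing $\min(s,m)^{-1}$, which beats the at-most-polynomial gain in the number of shapes, again because $m\geq k^2$. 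Carefully tracking how $a$, $b$, and the number of compatibility equations depend on $P$, $Q$, the components of $B$, and --- crucially --- on the interaction between the column pairing and the diagonal pairing (which is exactly what forces $b=k/2$ rather than $k$ on the main term), and summing the resulting geometric series, yields $\Gamma_{2k}\leq m^{k}s^{2k}k^{2k}2^{\bigO(k)}$ and likewise $\Gamma_k\leq m^{k/2}s^kk^k2^{\bigO(k)}$, completing the proof of Lemma~\ref{lem:boundGamma}.
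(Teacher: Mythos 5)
Your shape-sum approach replaces the paper's two separate arguments with a single bipartite-multigraph picture (column vertices against diagonal vertices, two edges per triple), and your identification of the main term --- including the bound $b \le k/2$ via the component count of the auxiliary diagonal graph $\mathcal{D}$ --- is a clean observation that parallels the paper's ``each cycle has a head that pays $\lg m$ bits'' amortization. The lower-bound half is close in spirit to the paper's, though the paper evades your ``rank computation / inclusion--exclusion / union bound'' cleanup by a Cauchy--Schwarz trick: it defines a family $\mathbb{S}$ of half-tuples of length $k/2$ each touching every column and diagonal at most once, observes $\Gamma_k \ge \sum_i b_i^2 \ge |\mathbb{S}|^2/B$ where $b_i$ counts half-tuples with signature $i$ and $B$ is the number of signatures, and then bounds $|\mathbb{S}|$ below and $B$ above by direct counting. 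That sidesteps having to verify that your explicit coupled construction produces distinct, admissible configurations for a $2^{-\bigO(k)}$ fraction of the parameter choices, which you gesture at but do not prove.

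The genuine gap is the upper bound on $\Gamma_{2k}$, which you yourself flag as ``the real work'' and then only outline. Bounding $\sum_{(B,\pi)} N(B,\pi)$ requires controlling simultaneously the number of shapes at a given $(P,Q)$, the loss in $a$ and $b$ as vertices merge or cycles appear, and the interaction between the column pairing and the diagonal pairing. Two of your central claims are asserted without derivation: that merging $p$ column vertices costs $s^{p-1}$ while gaining only $(k/24)^t/t!$ in shape count (what is $t$, and how does the exponential-formula estimate transfer to shapes that also carry a diagonal structure and a triple-pairing $\pi$?), and that each extra independent cycle of $B$ beyond the main pattern costs $\min(s,m)^{-1}$ against ``an at-most-polynomial gain.'' The latter is not obvious: $(B,\pi)$ carries $2k$ distinguishable edges organized into $k$ labelled triples, so the shape count at fixed cyclomatic number is genuinely $k^{\Theta(k)}$-sized, and one can in principle increase the cyclomatic number by $\Theta(k)$. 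Showing that the geometric series you allude to actually telescopes within $2^{\bigO(k)}$ is exactly the hard combinatorics the paper avoids: it instead exhibits an explicit injective encoding of every admissible tuple $S$ into $k\lg m + 2k\lg s + 2k\lg k + \bigO(k)$ bits (a head/mid/last scheme on the cycles of the $2$-regular graph whose vertices are triples and whose edges pair endpoints lying on a common diagonal), which bounds $\Gamma_{2k}$ by the code length with no shape summation at all. Until the shape sum is carried out in full, your upper bound is a plan, not a proof.
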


  The proofs of the two bounds in Lemma~\ref{lem:boundGamma} are in
  Sections \ref{sec:lower-bound-t} and \ref{sec:upper-bound-t}.

  Substituting the bounds from Lemma~\ref{lem:boundGamma} in
  \eqref{eq:e_zk_rewrite} and \eqref{eq:e_z2k_rewrite} we get
  \begin{align*}
    \E[Z_k] &= m^{-k/2} k^k 2^{-\bigO(k)} \\
    \E[Z_k^2] &= m^{-k} k^{2k} 2^{-\bigO(k)},
  \end{align*}
  which are the bounds we sought for Lemma~\ref{lem:boundZ}.
\end{proof}

\subsection{Lower Bounding $\Gamma_k$}
\label{sec:lower-bound-t}

We first recall that the definition of $\Gamma_k$ is the number of tuples
$S \in ([m] \times [s] \times [s])^k$ satisfying that
$\forall (i,j,h) \in S$ we have $h \neq j$ and furthermore:
\begin{itemize}
\item For all columns $a \in [s]$,
  $|\{(i,j,h) \in S \mid j=a \vee h=a\}| \mod 2 = 0$.
\item For all diagonals $a \in \{-(m-1),\dots,s-1\}$,
  $|\{(i,j,h) \in S \mid j-i=a \vee h-i=a\}| \mod 2 = 0$.
\end{itemize}

We view a triple $(i, j, h) \in ([m] \times [s] \times [s])$ as two
entries $(i, j)$ and $(i, h)$ in an $m \times s$ matrix. Furthermore,
when we say that a triple \emph{touches} a column or diagonal, a
matrix entry of the triple lie on that column or diagonal, so
$(i, j, h)$ touches columns $j$ and $h$ and diagonals $j - i$ and
$h - i$. Similarly, we say that a tuple
$S \in ([m] \times [s] \times [s])^k$ touches a given column or
diagonal $l$ times, if $l$ triples in $S$ touches that column or
diagonal.

We intent to prove a lower bound for $\Gamma_k$ by constructing a big
family of tuples
$\mathcal{F} \subseteq ([m] \times [s] \times [s])^k$, where each
tuple satisfies, that each column and diagonal touched by that tuple
is touched exactly twice. As each column and diagonal is touched an
even number of times, the number of tuples in the family is a lower
bound for $\Gamma_k$.
\begin{proof}[Proof of $\Gamma_k = m^{k/2} s^k k^k 2^{-\bigO(k)}$]
  We describe how to construct a family of tuples
  $\mathcal{F} \subseteq ([m] \times [s] \times [s])^k$ satisfying
  that $\forall S \in \mathcal{F}, \forall (i,j,h) \in S$ we have
  $h \neq j$ and furthermore:
  \begin{itemize}
  \item For all columns $a \in [s]$,
    $|\{(i,j,h) \in S \mid j=a \vee h=a\}| \in \{0, 2\}$.
  \item For all diagonals $a \in \{-(m-1),\dots,s-1\}$,
    $|\{(i,j,h) \in S \mid j-i=a \vee h-i=a\}| \in \{0, 2\}$.
  \end{itemize}

  From this and the definition of $\Gamma_k$ it is clear that
  $|\mathcal{F}| \leq \Gamma_k$.

  When constructing an $S \in \mathcal{F}$, we view $S$ as consisting
  of two halves $S_1$ and $S_2$, such that $S_1$ touches exactly the
  same columns and diagonals as $S_2$ and both $S_1$ and $S_2$ touches
  each column and diagonal at most once. To capture this, we give the
  following definition, where $\mathbb{S}$ is meant to be the family
  of such halves $S_1$ and $S_2$.
  \begin{definition}
    \label{def:bbS}
    Let $\mathbb{S}$ be the set of all tuples
    $S \in ([m] \times [s] \times [s])^{k/2}$ such that
    \begin{itemize}
    \item $\forall (i, j, h) \in S, j \neq h$
    \item For all columns
      $a \in [s], |\{(i, j, h) \in S \mid j = a \vee h = a\}| \leq 1$
    \item For all diagonals
      $a \in \{-(m-1), \dotsc, s - 1\}, |\{(i, j, h) \in S \mid j - i
      = a \vee h - i = a\}| \leq 1$
    \end{itemize}
  \end{definition}

  Definition~\ref{def:bbS} mimics the definition of $\Gamma_k$, and the first
  item in Definition~\ref{def:bbS} ensures that the triples in a tuple in
  $\mathbb{S}$ are of the same form as in $\Gamma_k$. The final two
  items ensure that each column and diagonal, respectively, is touched
  at most once. This is exactly the properties we wanted of $S_1$ and
  $S_2$ individually.

  We can now construct $\mathcal{F}$ as all pairs of (half) tuples
  $S_1, S_2 \in \mathbb{S}$, such that $S_1$ touches exactly the same
  columns and diagonals as $S_2$. To capture that $S_1$ and $S_2$ touch
  the same columns and diagonals, we introduce the notion of a
  signature. A signature of $S_i$ is the set of columns and diagonals
  touched by $S_i$.

  To have $S_1$ and $S_2$ touch exactly the same columns and
  diagonals, it is necessary and sufficient that they have the same
  signature.

  We introduce the following notation: $B$ denotes the number of
  signatures with at least one member, and by enumerating the
  signatures, we let $b_i$ denote the number of (half) tuples in
  $\mathbb{S}$ with signature $i$.

  We recall that a (half) tuple $S_1 \in \mathbb{S}$ touches each
  column and diagonal at most once, and if $S_1$ and $S_2$ share the
  same signature, they touch exactly the same columns and
  diagonals. Therefore, using $\circ$ to mean concatenation,
  $S = S_1 \circ S_2 \in \mathcal{F}$, as each column and diagonal
  touched is touched exactly twice. Therefore $|\mathcal{F}|$ is a
  lower bound for $\Gamma_k$. Note that for a given signature $i$, the
  number of choices of $S_1$ and $S_2$ with that signature is
  $b_i^2$. This gives the following inequality,
  \begin{equation*}
    \Gamma_k \geq |\mathcal{F}| = \sum_{i=1}^B b_i^2.
  \end{equation*}

  We now apply the Cauchy-Schwarz inequality:
  \begin{equation}
    \sum_{i=1}^B b_i^2 \sum_{i=1}^B 1^2 \geq \big(\sum_{i=1}^B b_i \big)^2 \implies
    \sum_{i=1}^B b_i^2 \geq \frac{\big(\sum_{i=1}^B b_i \big)^2}{\sum_{i=1}^B 1^2} \implies
    \label{eq:lower_bound_sum_bi2}
    \Gamma_k \geq \frac{|\mathbb{S}|^2}{B}.
  \end{equation}

  To get a lower bound on $|\mathbb{S}|^2/B$ (and in turn $\Gamma_k$),
  we need a lower bound on $|\mathbb{S}|$ and an upper bound on
  $B$. These bounds are stated in the following lemmas
  \begin{lemma}
    \label{lem:select_distinct_mnn}
    $|\mathbb{S}| = \Omega(m^{k/2} s^k 2^{-k})$.
  \end{lemma}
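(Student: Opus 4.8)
The plan is to prove the bound by a greedy, triple-by-triple construction of the tuples in $\mathbb{S}$ and to count the number of legal ways to perform each step. Recall that a tuple $S \in ([m]\times[s]\times[s])^{k/2}$ lies in $\mathbb{S}$ precisely when its $k/2$ triples are pairwise ``non-colliding'': every column $a \in [s]$ is touched at most once, every diagonal $a \in \{-(m-1),\dots,s-1\}$ is touched at most once, and within each triple $(i,j,h)$ we have $j \neq h$ (which moreover forces the two diagonals $j-i$ and $h-i$ contributed by that triple to be distinct). Consequently, a triple that is legal to append to a partial tuple contributes exactly two fresh columns and two fresh diagonals, and every prefix of a tuple in $\mathbb{S}$ is itself a legal partial tuple of the obvious shorter shape; so if $N_t$ denotes the number of legal partial tuples of length $t$, then $N_0 = 1$, $N_{k/2} = |\mathbb{S}|$, and $N_{t+1} = \sum_P e(P)$, where $P$ ranges over legal length-$t$ prefixes and $e(P)$ is the number of legal extensions of $P$.

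Next I would lower bound $e(P)$ uniformly over all legal prefixes $P$ of length $t \le k/2 - 1$. When such a $P$ has been placed, exactly $2t$ columns and $2t$ diagonals are used. To append $(i,j,h)$: pick $i \in [m]$ freely ($m$ choices); then pick a column $j \in [s]$ avoiding the at most $2t$ used columns and also avoiding the at most $2t$ columns $j$ for which $j-i$ is a used diagonal (for fixed $i$ each used diagonal forbids at most one value of $j$), leaving at least $s - 4t$ choices; then pick $h \in [s]$ avoiding the same two forbidden sets together with the single value $h = j$, leaving at least $s - 4t - 1$ choices. Since $t \le k/2 - 1$ and $s = 4k$, both $s - 4t$ and $s - 4t - 1$ are at least $2k$, so $e(P) \ge m(2k)(2k) = 4mk^2$. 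Appending such a triple keeps us in $\mathbb{S}$: its two columns and two distinct diagonals were unused, and $j \neq h$ by construction.

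Combining these, $N_{t+1} \ge 4mk^2 \cdot N_t$ for $0 \le t \le k/2 - 1$, whence $|\mathbb{S}| = N_{k/2} \ge (4mk^2)^{k/2} = 2^k m^{k/2} k^k$. Substituting $k = s/4$ turns $k^k$ into $s^k 2^{-2k}$, giving $|\mathbb{S}| \ge m^{k/2} s^k 2^{-k}$, which is the claimed $\Omega(m^{k/2} s^k 2^{-k})$ bound (with constant $1$, in fact). All the ranges appearing above are non-empty under the standing assumptions of Theorem~\ref{thm:main} ($k$ even and positive, $s = 4k \le m$), so no degenerate cases arise.

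The only delicate part is the bookkeeping when choosing $j$ and $h$: one must count the forbidden columns correctly as the union of the used columns and the (at most one per used diagonal) columns that would reuse a diagonal, and then track constants closely enough that the $k^k$ factor converts exactly to $s^k 2^{-2k}$. If one is sloppy here and only argues, say, ``a constant fraction of the $s$ columns survive at each step,'' one ends up with a bound of the form $m^{k/2} s^k 2^{-\bigO(k)}$ with a suboptimal hidden constant in the exponent, which would not establish the stated $2^{-k}$; getting the sharp constant is exactly where the argument must be done carefully.
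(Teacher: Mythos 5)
Your proof is correct and follows essentially the same route as the paper: both build $S$ triple by triple, observe that each previously placed triple excludes at most four columns in any given row (two reused columns plus two diagonal intersections), lower bound the remaining choices by a uniform $\Omega(s)$ using $s = 4k$, and multiply to get $m^{k/2} s^k 2^{-k}$. The only cosmetic difference is that the paper packages this as an explicit recurrence $F(r,c,t)$ with a product over steps and bounds each factor by $c/2$, while you bound each step's contribution directly by $4mk^2$; the arithmetic is identical.
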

  \begin{lemma}
    \label{lem:num_sigs}
    $ B = \bigO\left(\binom{m + s}{k/2} s^{k/2} \binom{s}{k} \right) $
  \end{lemma}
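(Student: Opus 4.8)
The plan is an encoding (injective map) argument: I will exhibit, for each signature that actually occurs, a short description, and then count the number of possible descriptions. Recall from Definition~\ref{def:bbS} that a half-tuple $S \in \mathbb{S}$ is a sequence of $k/2$ triples $(i_t,j_t,h_t)$ with $j_t \neq h_t$, and that the two ``$\leq 1$'' constraints force the $k$ columns $j_1,h_1,\dots,j_{k/2},h_{k/2}$ to be pairwise distinct, and likewise the $k$ diagonals $j_1-i_1,\,h_1-i_1,\dots,j_{k/2}-i_{k/2},\,h_{k/2}-i_{k/2}$ to be pairwise distinct. Hence a signature is a pair $(C,\Delta)$ with $C\subseteq[s]$, $\Delta\subseteq\{-(m-1),\dots,s-1\}$, and $|C|=|\Delta|=k$. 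There are at most $\binom{s}{k}$ choices for $C$, so the problem reduces to bounding the number of realizable $\Delta$'s, and I want this bound to be $\binom{m+s}{k/2}s^{k/2}$ rather than the naive $\binom{m+s-1}{k}$ (which is far too weak once $m\gg s$).

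The key observation is that the Toeplitz structure couples the two diagonals of a triple to its two columns: $(j_t-i_t)-(h_t-i_t)=j_t-h_t$. So if I set $d_t:=\min(j_t-i_t,\,h_t-i_t)$ and $\ell_t:=|j_t-h_t|\in\{1,\dots,s-1\}$, then the unordered pair of diagonals touched by triple $t$ is exactly $\{d_t,\,d_t+\ell_t\}$, which is determined by the two numbers $(d_t,\ell_t)$. Since all $k$ touched diagonals are distinct, the $k/2$ minima $d_1,\dots,d_{k/2}$ are distinct, so I can record them as a $(k/2)$-element subset of $\{-(m-1),\dots,s-1\}$ (at most $\binom{m+s-1}{k/2}\le\binom{m+s}{k/2}$ choices); then, listing those minima in increasing order, I record the matching sequence of absolute column gaps in $\{1,\dots,s-1\}^{k/2}$ (at most $s^{k/2}$ choices). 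From $C$, this subset, and this sequence I can reconstruct $\Delta=\bigcup_t\{d_t,d_t+\ell_t\}$, hence the whole signature. Therefore the number of occurring signatures is at most $\binom{s}{k}\binom{m+s}{k/2}s^{k/2}$, which is the claimed bound.

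The one point that needs care — and the only place I expect anything beyond routine bookkeeping — is checking that the proposed decoding is genuinely well-defined and lossless for every signature that occurs. Here one uses that distinct triples of an $S\in\mathbb{S}$ touch disjoint diagonals, so the per-triple minima $d_t$ really are distinct and recording them as a set, together with the convention of reading them in sorted order, recovers their pairing with the $\ell_t$'s; and one must note that for an \emph{upper} bound we need not insist that the recorded $\ell_t$'s actually be differences of two columns of $C$ — letting each range over all of $\{1,\dots,s-1\}$ is a harmless overcount. (The same encoding applies unchanged to a partial circulant $T$, since only the first $s$ columns are ever touched, as in the preceding Remark.)
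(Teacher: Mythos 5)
Your proof is correct and is essentially the paper's own argument: the paper chooses the diagonals of ``left endpoints'' (a $k/2$-subset of the $m+s-1$ diagonals, $\binom{m+s}{k/2}$ choices), then for each the diagonal of the corresponding right endpoint, which is constrained to lie in the same row and hence offers at most $s$ choices ($s^{k/2}$ total), then the $k$ distinct columns ($\binom{s}{k}$). Your $d_t=\min(j_t-i_t,h_t-i_t)$ is precisely the left-endpoint diagonal and your $\ell_t=|j_t-h_t|$ is the same-row offset to the right-endpoint diagonal, so the two proofs coincide; you merely make the injectivity of the encoding more explicit (sorting the $d_t$'s to pair them with the $\ell_t$'s), a detail the paper leaves implicit.
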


  Before proving any of these lemmas, we show that they together with
  \eqref{eq:lower_bound_sum_bi2} give the desired lower bound on
  $\Gamma_k$:
  \begin{align}
    \Gamma_k &= \frac
          {\Omega(m^{k/2} s^k 2^{-k})^2}
          % --------------------
          {\bigO\left(\binom{m + s}{k/2} s^{k/2} \binom{s}{k} \right)}
    \label{eq:gamma_lower_ugly}
             = \Omega\left(\frac
          {m^k s^{2k} 2^{-2k} (k/2)^{k/2} k^k}
          % --------------------
          {(m + s)^{k/2} s^{k/2} s^k}
          \right).
  \end{align}

  Because $s = 4k$, we have
  $\frac{(k/2)^{k/2}}{s^{k/2}} = 2^{-\Theta(k)}$, and because
  $s \leq m$: $\frac{m^k}{(m+s)^{(k/2)}} = m^{k/2}
  2^{-\Theta(k)}$. With this we can simplify
  \eqref{eq:gamma_lower_ugly},
  \begin{align*}
    \Gamma_k &= m^{k/2} s^k k^k 2^{-\bigO(k)}.
  \end{align*}
  which is the lower bound we sought.
\end{proof}

\begin{proof}[Proof of Lemma~\ref{lem:select_distinct_mnn}]
  Recall that $\mathbb{S} \subseteq ([m] \times [s] \times [s])^{k/2}$
  is the set of (half) tuples that touch each column and diagonal at
  most once, and, for each triple $(i, j, h)$ in these (half)
  tuples, we have $j \neq h$.

  We prove Lemma~\ref{lem:select_distinct_mnn} by analysing how we can
  create a large number of distinct $S \in \mathbb{S}$ by choosing the
  triples in $S$ iteratively.

  For each triple, we choose a row and two distinct entries on this
  row. We choose the row among any of the $m$ rows.

  However, because $S \in \mathbb{S}$, when choosing entries on the
  row, we cannot choose entries that lie on columns or diagonals
  touched by previously chosen triples. Instead we choose the two
  entries among any of the other entries. Therefore, whenever we
  choose a triple, this triple prevents at most four row entries from
  being chosen for every subsequent triple, as the two diagonals and
  two columns touched by the chosen triple intersect with at most four
  entries on the rows of the subsequent triples. This leads to the
  following recurrence, describing a lower bound for the number of
  triples
  \begin{align}
    \label{eq:select_distinct_mnn:recurr}
    F(r, c, t) =
    \begin{cases}
      r \cdot c \cdot (c - 1) \cdot F(r, c - 4, t - 1) & \text{if } t > 0 \\
      1 & \text{otherwise}
    \end{cases}
  \end{align}
  where $r$ is the number of rows to choose from, $c$ is the minimum
  number of choosable entries in any row, and $t$ is the number of
  triples left to choose.

  Inspecting \eqref{eq:select_distinct_mnn:recurr}, we can see that
  $F$ can equivalently be defined as
  \begin{align}
    \label{eq:select_distinct_mnn:recurr_simpl}
    F(r, c, t) &= r^t \prod_{i = 0}^{t-1} (c - 4i) (c - 1 - 4i).
  \end{align}

  If $t \leq \frac{c}{8}$ then the terms inside the product in
  \eqref{eq:select_distinct_mnn:recurr_simpl} are greater than
  $\frac{c}{2}$, so we can bound $F$ from below:
  \begin{align*}
    F(r, c, t) &\geq r^t \big( \frac{c}{2} \big)^{2t} = r^t c^{2t} \frac{1}{4^t}.
  \end{align*}

  We now insert the values for $r, c$ and $t$ to find a lower bound
  for $|\mathbb{S}|$, noting that $s = 4 k$ ensures that
  $t \leq \frac{c}{8}$:
  \begin{equation*}
    |\mathbb{S}| \geq F \big( m, s, \frac{k}{2} \big) \geq m^{k/2} s^k \frac{1}{4^{k/2}} \implies
    |\mathbb{S}| = \Omega(m^{k/2} s^k 2^{-k}).
  \end{equation*}
\end{proof}

\begin{proof}[Proof of Lemma~\ref{lem:num_sigs}]
  Recall that for at triple $S \in \mathbb{S}$ we define the signature
  as the set of columns and diagonals touched by $S$. Furthermore,
  viewing a triple $(i, j, h) \in ([m] \times [s] \times [s])$ as the
  two entries $(i, j)$ and $(i, h)$ in an $m \times s$ matrix, we
  define the left endpoint as $(i, \min\{j, h \}$ and the right
  endpoint as $(i, \max\{j, h\})$.

  The claim to prove is
  \begin{equation*}
    B = \bigO\left(\binom{m + s}{k/2} s^{k/2} \binom{s}{k} \right).
  \end{equation*}
  This is proven by first showing an upper bound on the number of
  choices for the diagonals of left endpoints, then diagonals of right
  endpoints and finally for columns.

  In an $m \times s$ matrix there are $m + s$ different diagonals and
  as the chosen diagonals have to be distinct, there are
  $\binom{m + s}{k/2}$ choices for the diagonals corresponding to
  left endpoints in a triple.

  As the right endpoint of a triple has to be in the same row as the
  left endpoint, there are at most $s$ choices for the diagonal
  corresponding to the right endpoint when the left endpoint has been
  chosen (which it has in our case). This gives a total of $s^{k/2}$
  choices for diagonals corresponding to right endpoints.

  Finally, there are $s$ columns to choose from and the chosen columns
  have to be distinct, and so the total number of choices of columns
  is $\binom{s}{k}$.

  The product of these number of choices gives the upper bound sought.
\end{proof}

\subsection{Upper Bounding $\Gamma_{2k}$}
\label{sec:upper-bound-t}

\begin{proof}
  Recall that $\Gamma_{2k}$ is defined as the number of tuples
  $S \in ([m] \times [s] \times [s])^{2k}$ such that
  $\forall (i,j,h) \in S$ we have $h \neq j$ and furthermore:
  \begin{itemize}
  \item For all columns $a \in [s]$, $|\{(i,j,h) \in S \mid  j=a \vee h=a\}| \mod 2 = 0$.
  \item For all diagonals $a \in \{-(m-1),\dots,s-1\}$, $|\{(i,j,h) \in S \mid  j-i=a \vee h-i=a\}| \mod 2 = 0$.
  \end{itemize}

  Let $\mathcal{F} \subseteq ([m] \times [s] \times [s])^{2k}$ be the
  family of tuples satisfying these conditions, and so
  $|\mathcal{F}| = \Gamma_{2k}$.

  To prove an upper bound on $\Gamma_{2k}$, we show how to encode a
  tuple $S \in \mathcal{F}$ using at most
  $k \lg m + 2k \lg s + 2k \lg k + \bigO(k)$ bits, such that $S$ can
  be decoded from this encoding. Since any $S \in \mathcal{F}$ can be
  encoded using $k \lg m + 2k \lg s + 2k \lg k + \bigO(k)$ bits and
  $|\mathcal{F}| = \Gamma_{2k}$, we can conclude:
  \begin{equation*}
    \Gamma_{2k} = 2^{k \lg m + 2k \lg s + 2k \lg k + \bigO(k)}
    = m^{k} s^{2k} k^{2k} 2^{\bigO(k)}.
  \end{equation*}

  Let $\sigma$ denote the encoding function and $\sigma^{-1}$ denote
  the decoding function. If $S \in \mathcal{F}$ and $t \in S$,
  $\sigma(t)$ denotes the encoding of the triple $t$, $\sigma(S)$
  denotes the encoding of the entire tuple $S$, and
  $\sigma(\mathcal{F})$ denotes the image of $\sigma$.

  A tuple $S \in \mathcal{F}$ consists of triples
  $t_1, t_2, \dotsc, t_{2k}$ such that
  $S = t_1 \circ t_2 \circ \dotsb \circ t_{2k}$. To encode
  $S \in \mathcal{F}$ we encode each of the triples and store them in
  the same order:
  $\sigma(S) = \sigma(t_1) \circ \sigma(t_2) \circ \dotsb \circ
  \sigma(t_{2k})$.

  We will first describe a graph view of a tuple $S$ which will be
  useful for encoding and decoding, then we will show an encoding
  algorithm and finally a decoding algorithm.

  \paragraph*{Graph}

  A tuple $S \in \mathcal{F}$ forms a (multi)graph structure, where
  every triple $(i, j, h) \in S$ is a vertex. Since
  $S \in \mathcal{F}$, there lies an even number of triple endpoints
  on each diagonal. We can thus pair endpoints lying on the same
  diagonal, such that every endpoint is paired with exactly one other
  endpoint. When two triples have endpoints that are paired, the
  triples have an edge between them in the graph. As every triple has
  two endpoints, every vertex has degree two, and so the graph
  consists entirely of simple cycles of length at least two.

  \paragraph*{Encoding}

  To encode an $S \in \mathcal{F}$, we first encode each cycle by
  itself by defining the $\sigma(t)$'s for the triples $t$ in the
  cycle. After this, we order the defined $\sigma(t)$'s as the $t$'s
  were ordered in the input.
  \begin{enumerate}
  \item For each cycle we perform the following:
    \begin{enumerate}
    \item We pick any vertex $t = (i, j, h)$ of the cycle and give it
      the type \textsf{head}. Define $\sigma(t)$ as the concatenation
      of its type \textsf{head}, its row $i$, and two columns $j$ and
      $h$. This uses $\lg m + 2 \lg s + \bigO(1)$ bits.
    \item We iterate through the cycle starting after \textsf{head}
      and give vertices, except the last, type \textsf{mid}. The last
      vertex just before \textsf{head} is given the type
      \textsf{last}.
    \item For each triple $t$ of type \textsf{mid} we store its type
      and two columns explicitly. However, instead of storing its row
      we store the index of its predecessor in the cycle order as well
      as how they are connected: If we typed $t_r$ just before $t_s$
      when iterating through the cycle, when encoding $t_s$ we store
      $r$ as well as whether $t_r$ and $t_s$ are connected by the left
      or right endpoint in $t_r$ and left or right endpoint in
      $t_s$. So define $\sigma(t)$ as the concatenation of
      \textsf{mid}, the two columns, the predecessor index and how it
      is connected to the predecessor. All in all we spend
      $\lg k + 2 \lg s + \bigO(1)$ bits encoding each \textsf{mid}.
    \item Finally, to encode the triple $t$, which is typed
      \textsf{last}, we define $\sigma(t)$ as the concatenation of its
      type, its predecessors index, how it is connected to its
      predecessor, and the column of the endpoint on the predecessor's
      diagonal. We thus spend $\lg k + \lg s + \bigO(1)$ bits to
      encode a \textsf{last}. However, since $s = 4k$ the number of
      bits per encoded \textsf{last} is equivalent to
      $2 \lg k + \bigO(1)$, which turns out to simplify the analysis
      later.
    \end{enumerate}

    Note that for each triple, the type is encoded in the first 2
    bits\footnote{To encode the type in 2 bits we could use the
      following scheme: $00 = \mathsf{head}$, $01 = \mathsf{mid}$,
      $10 = \mathsf{last}$, and $11$ is unused.}  in the encoding of
    the triple. This will be important during decoding.

  \item After encoding all cycles, we order the encoded triples in the
    same order as the triples in the input, and output the
  concatenation of the encoded triples:
    $\sigma(t_1) \circ \sigma(t_2) \circ \dotsb \circ \sigma(t_{2k})$.
  \end{enumerate}

  To analyse the number of bits needed in total, we look at the
  average number of bits per triple inside a cycle. Since all cycles
  have a length of at least two, each cycle has a \textsf{head} and a
  \textsf{last} triple. These two use an average of
  $\frac{\lg m}{2} + \lg s + \lg k + \bigO(1)$ bits. We now claim that
  the number of bits per \textsf{mid} is bounded by this average.
  Recall that we assumed $\sqrt{m} \geq k$ and $s = 4k$:
  \begin{alignat*}{4}
    \sqrt{m} \geq k \implies \frac{\lg m}{2} \geq \lg k \implies \frac{\lg m}{2} + \lg s + \lg k + 2 &\geq \lg k + 2 \lg s \implies \\
    \frac{\lg m}{2} + \lg s + \lg k + \bigO(1) &\geq |\mathsf{mid}|.
  \end{alignat*}

  Since the average of all $2k$ triples is at most
  $\frac{\lg m}{2} + \lg s + \lg k + \bigO(1)$ bits, the number of
  bits for all triples is at most
  $k \lg m + 2k \lg s + 2k \lg k + \bigO(k)$.

  \paragraph*{Decoding}
  We now show how to decode any $\sigma(S) \in \sigma(\mathcal{F})$
  such that $\sigma^{-1}(\sigma(S)) = S$.

  First we need to extract the encodings of the individual triples,
  then we decode each cycle, and finally we restore the order of
  triples. The following steps describes this algorithm in more
  detail.

  \begin{enumerate}
  \item We first extract the individual $\sigma(t_i)$'s, by iterating
    through the bit string $\sigma(S)$. By looking at the type in the
    first two bits of an encoded triple we know the length of the
    encoded triple. From this we can extract $\sigma(t_i)$ as well as
    know where $\sigma(t_{i+1})$
    begins. \label{item:decoding_extract_encoded_triples}
  \item We now wish to decode each cycle to get the row and two columns
    of every triple, so do the following for each \textsf{head} $t_i$:
    \begin{enumerate}
    \item $t \leftarrow t_i$
    \item Look at the first two bits in $t$ to determine its type, and
      while $t$ is not a \textsf{last}, do:
      \begin{enumerate}
      \item If $t$ is a \textsf{head}, the row and two columns are
        stored explicitly.
      \item If $t$ is a \textsf{mid}, the two columns are stored
        explicitly. From the reference to $t$'s predecessor, we can
        calculate the diagonal shared between $t$ and its
        predecessor. This is can be done as we have stored which
        endpoint (left or right) of the predecessor lies on the
        diagonal, and as we decode in the same order as we encoded, we
        have already decoded the predecessor.

        We know which of $t$'s columns the shared diagonal intersects,
        and so we can calculate the row.

        If the predecessor is a \textsf{head}, take note of which
        diagonal is shared with it.
      \item $t \leftarrow$ the triple that has $t$ as its predecessor.
      \end{enumerate}
    \item $t$ is now a \textsf{last}, so we know the predecessor's
      index as well as a column. However, as the graph consists of
      cycles rather than just linked lists, \textsf{last} shares a
      diagonal with its predecessor as well as sharing a diagonal with
      \textsf{head}. We have noted which diagonal has already been
      used for \textsf{head}, so the other diagonal \textsf{head}
      touches is shared with \textsf{last}.

      From these two diagonals and the column, it is possible to
      calculate the row by the intersection between the predecessor
      diagonal and the column, and the other column by the
      intersection of the row and the other diagonal.
    \end{enumerate}
  \item Finally order the triples as they were, when the encoded
    versions were extracted in step
    \ref{item:decoding_extract_encoded_triples}.
  \end{enumerate}
\end{proof}

\section{Lower Bound for $N$ Vectors}
\label{sec:lower-bound-n-vectors}
In this section, we generalize the result of Section~\ref{sec:onevector} to
obtain a lower bound for preserving all pairwise distances amongst a
set of $N$ vectors. Our proof uses Theorem~\ref{thm:main} as a building
block. Recall that Theorem~\ref{thm:main} guarantees that there is a vector
$x$ such that
\begin{equation*}
  \Pr\left[\left|\left\|\frac{1}{\sqrt{m}}TDx\right\|_2^2 - 1\right| > \eps\right] \geq 2^{-\bigO(\eps \sqrt{m})}.
\end{equation*}
Moreover, the vector $x$ has non-zeroes only in the first
$O(\sqrt{m})$ coordinates. From such a vector $x$, define
$x_{\rightarrow i}$ as the vector having its $j$'th coordinate equal
to the $(j-i)$'th coordinate of $x$ if $j>i$ and otherwise its
$j$'th coordinate is $0$. In words, $x_{\rightarrow i}$ is just $x$
with all coordinates \emph{shifted} by $i$.

For $i \leq n-O(\sqrt{m})$ ($i$ small enough that all the non-zero
coordinates of $x$ stay within the $n$ coordinates), we see that
$TDx$ and $TDx_{\rightarrow i}$ have the exact same
distribution. Furthermore, if $i \geq m + C\sqrt{m}$ for a big
enough constant $C$, $TDx$ and $TDx_{\rightarrow i}$ are
independent. To see this, observe that the only random variables
amongst $t_{-(m-1)},\dots,t_{n-1}$ that are multiplied with a
non-zero coordinate of $x$ in $TDx$ are
$t_{-(m-1)},\dots,t_{O(\sqrt{m})}$. Similarly, only random variables
$d_1,\dots,d_{O(\sqrt{m})}$ amongst $d_1,\dots,d_{n}$ are multiplied
by a non-zero coordinate of $x$. For $x_{\rightarrow i}$, the same
is true for variables $t_{-(m-1)+i},\dots,t_{O(\sqrt{m})+i}$ and
$d_{i+1},\dots,d_{i+O(\sqrt{m})}$. If $i \geq m + C\sqrt{m}$, these
sets of variables are disjoint and hence $TDx$ and
$TDx_{\rightarrow i}$ are independent.

With the observations above in mind, we now define a set of vectors
$X$ as follows
\begin{equation*}
  X := \{\textbf{0}, x, x_{\rightarrow m + C\sqrt{m}}, x_{\rightarrow 2(m+C\sqrt{m})},
  \dots, x_{\rightarrow \lfloor (n-C\sqrt{m})/(m+C\sqrt{m})\rfloor (m + C\sqrt{m})}\}.
\end{equation*}
The $\textbf{0}$-vector clearly maps to the $\textbf{0}$-vector when
using $\frac{1}{\sqrt{m}}TD$ as embedding. Furthermore, by the
arguments above, the embeddings of all the remaining vectors are
independent and have the same distribution. It follows that
\begin{align*}
  \Pr\left[\forall x_{\rightarrow i} \in X :
  \left|\left\|\frac{1}{\sqrt{m}}TDx_{\rightarrow i}\right\|_2^2 -
  1\right|  \leq \eps\right] &\leq \left(1-2^{-\bigO(\eps
                               \sqrt{m})}\right)^{|X|} \\
                             &\leq \exp\left(-|X|2^{-\bigO(\eps
                               \sqrt{m})}\right). \\
\end{align*}
Now since $\textbf{0} \in X$, it follows that to preserve all
pairwise distances amongst vectors in $X$ to within $(1 \pm \eps)$,
we also have to preserve all norms to within $\pm \eps$. This is
true since for all $x$ of unit norm:
\begin{align*}
  \left|\left\|\frac{1}{\sqrt{m}}TDx\right\|_2^2 -
  1\right| &= \left|\left\|\frac{1}{\sqrt{m}}TDx - \frac{1}{\sqrt{m}}TD\textbf{0}\right\|_2^2 -
             \left\|x - \textbf{0} \right\|_2^2\right|.
\end{align*}
This proves the following:
\begin{theorem}\label{thm:mainMany}
  Let $T$ be $m \times n$ Toeplitz and $D$ $n \times n$ diagonal. If
  $t_{-(m-1)},t_{-(m-2)}, \dots,t_{n-1}$ and
  $d_1,\dots,d_n$ are independently distributed Rademacher random
  variables for $i=-(m-1),\dots,n-1$ and $j=1,\dots,n$, then for all
  $0 < \eps < C$, where $C$ is a universal constant, there exists a
  set of $N= \Omega(n/m)$ vectors $X \subset \R^n$ such that
  \begin{equation*}
    \Pr\left[\forall x,y \in X : \left|\left\|\frac{1}{\sqrt{m}}TDx -
          \frac{1}{\sqrt{m}}TDy\right\|_2^2  - \| x-y\|_2^2\right|  \leq
      \eps\|x-y\|_2^2 \right] \leq \exp\left(-N2^{-\bigO(\eps
        \sqrt{m})}\right).
  \end{equation*}
\end{theorem}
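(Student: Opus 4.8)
The plan is to bootstrap Theorem~\ref{thm:main} by exploiting the translation structure of the Toeplitz matrix. Theorem~\ref{thm:main} hands us a unit vector $x \in \R^n$ whose support lies in the first $\bigO(\sqrt m)$ coordinates and for which $\Pr[\,|\,\|TDx/\sqrt m\|_2^2-1\,|>\eps\,]\ge 2^{-\bigO(\eps\sqrt m)}$. First I would manufacture many ``copies'' of this bad event that are both identically distributed and mutually independent. For a shift $i$, let $x_{\rightarrow i}$ denote $x$ with every coordinate moved $i$ places to the right (padding with zeros). Since entry $(\ell,j)$ of $T$ depends only on $j-\ell$, shifting the input merely shifts which band of diagonal variables $t_a$ and which diagonal entries $d_j$ get multiplied against nonzero coordinates; hence whenever $i\le n-\bigO(\sqrt m)$, so that all nonzeros stay inside $[n]$, the random vector $TDx_{\rightarrow i}$ has exactly the same distribution as $TDx$.

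Second, I would take the shifts to be multiples of $m+C\sqrt m$ for a sufficiently large absolute constant $C$. The key point is that $TDx$ reads only the variables $t_{-(m-1)},\dots,t_{\bigO(\sqrt m)}$ and $d_1,\dots,d_{\bigO(\sqrt m)}$, so $TDx_{\rightarrow i}$ reads only $t_{-(m-1)+i},\dots,t_{\bigO(\sqrt m)+i}$ and $d_{i+1},\dots,d_{i+\bigO(\sqrt m)}$; with spacing at least $m+C\sqrt m$ these variable sets are pairwise disjoint across distinct shift multiples, so the embeddings of the shifted copies are mutually independent. Collecting all admissible multiples yields $N=\Omega(n/m)$ such vectors, and I adjoin the zero vector to form $X$.

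Third, I would reduce distance preservation to norm preservation. Since $\mathbf 0\in X$, for any unit vector $u$ we have $|\,\|TDu/\sqrt m\|_2^2-1\,| = |\,\|TDu/\sqrt m-TD\mathbf 0/\sqrt m\|_2^2-\|u-\mathbf 0\|_2^2\,|$ because $TD\mathbf 0=\mathbf 0$ and $\|u\|_2=1$, and the distance condition for the pair $(u,\mathbf 0)$ demands this quantity be at most $\eps$. Thus the event ``all pairwise distances in $X$ are preserved to within $(1\pm\eps)$'' is contained in the event ``$\|TDx_{\rightarrow i}/\sqrt m\|_2^2$ lies in $(1\pm\eps)$ for every shifted copy simultaneously.'' By independence this probability is at most $(1-2^{-\bigO(\eps\sqrt m)})^{N}\le\exp(-N\,2^{-\bigO(\eps\sqrt m)})$, which is the claimed bound. (Theorem~\ref{thm:highlevelmainMany} then follows: $\exp(-N\,2^{-\bigO(\eps\sqrt m)})=\Omega(1)$ forces $\eps\sqrt m=\Omega(\lg N)$, i.e.\ $m=\Omega(\eps^{-2}\lg^2 N)$, unless the construction could not deliver $N$ vectors, i.e.\ unless $m=\Omega(n/N)$.)

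The only delicate step is the independence bookkeeping in the second paragraph: pinning down precisely which $t_a$'s and $d_j$'s a shifted copy actually depends on (this crucially uses that $x$ is supported on $\bigO(\sqrt m)$ coordinates, a property explicitly guaranteed by Theorem~\ref{thm:main}), and checking that the spacing $m+C\sqrt m$ is simultaneously large enough to force disjointness of those variable sets and small enough that $\lfloor(n-C\sqrt m)/(m+C\sqrt m)\rfloor=\Omega(n/m)$ shifts fit inside the $n$ coordinates. Everything else is a routine union/independence argument over these events.
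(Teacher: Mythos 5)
Your proposal is correct and follows essentially the same route as the paper: shifting the bad vector from Theorem~\ref{thm:main} by multiples of $m + C\sqrt{m}$ to obtain identically distributed, mutually independent copies, adjoining the zero vector to reduce distance preservation to norm preservation, and concluding with the bound $(1-2^{-\bigO(\eps\sqrt{m})})^{N} \leq \exp(-N 2^{-\bigO(\eps\sqrt{m})})$. The independence bookkeeping you flag as the delicate step is exactly the argument the paper gives, using that $x$ is supported on its first $\bigO(\sqrt{m})$ coordinates so that the relevant sets of $t_a$'s and $d_j$'s are disjoint across shifts.
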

It follows from Theorem~\ref{thm:mainMany} that if we want to have constant
probability of successfully embedding \emph{any} set of $N$ vectors,
then either it must be the case that $m = \Omega(n/N)$, or
\begin{align*}
  -N2^{-\bigO(\eps
  \sqrt{m})} &\geq -C_0,
\end{align*}
where $C_0$ is a constant. This in turn implies that
\begin{alignat*}{3}
  \lg N -\bigO(\eps
  \sqrt{m}) &\leq \lg C_0 &&\implies \\
  \sqrt{m} &= \Omega(\eps^{-1} \lg N) &&\implies \\
  m &= \Omega(\eps^{-2}\lg^2 N).
\end{alignat*}
This completes the proof of Theorem~\ref{thm:highlevelmainMany}.

%%
%% Bibliography
%%
\bibliography{references}

\end{document}